\date{}
\newtheorem{theorem}{Theorem}[section]
\newtheorem{xmpl}[theorem]{Example}
\newtheorem{rmrk}[theorem]{Remark}
\newtheorem{prpstn}[theorem]{Proposition}
\newtheorem{crllr}[theorem]{Corollary}
\newtheorem{lmm}[theorem]{Lemma}
\newcommand{\Dc}{\mathcal{D}}
\newcommand{\Sc}{\mathcal{S}}
\newcommand{\Tc}{\mathcal{T}}
\newcommand{\Rc}{\mathcal{R}}
 \newcommand{\xR}{\mathbb{R}}
\def\xLtwo{{{\rm L}^2}}
\def\xHone{{{\rm H}^1}}
\title{A tensor approximation method based on ideal minimal residual
formulations for the solution of high-dimensional problems
\thanks{This work is supported by the ANR (French National Research Agency, grants ANR-2010- COSI-006 and ANR-2010-BLAN-0904)}}
\author{Marie Billaud-Friess\footnotemark[2]
  \and Anthony Nouy\footnotemark[2]\ \footnotemark[3]  \and Olivier Zahm\footnotemark[2]}
\begin{document}

\maketitle

 \renewcommand{\thefootnote}{\fnsymbol{footnote}}
\footnotetext[2]{Ecole Centrale de Nantes, GeM UMR CNRS 6183, LUNAM Universit\'e, France.}
\footnotetext[3]{Corresponding author (anthony.nouy@ec-nantes.fr).}

\renewcommand{\thefootnote}{\arabic{footnote}}

\begin{abstract}  In this paper, we propose a method for the approximation of the solution of high-dimensional weakly coercive problems formulated in tensor spaces using low-rank approximation formats. The method can be seen as a perturbation of a minimal residual method with a measure of the residual corresponding to the error in a specified solution norm.  {The residual norm can be designed such that the resulting low-rank approximations are optimal with respect to particular norms of interest, thus allowing to take into account a particular objective in the definition of reduced order approximations of high-dimensional problems}. {We introduce and analyze an iterative algorithm that is able to provide an approximation of the optimal approximation of the solution in a given low-rank subset, without any a priori information on this solution.} We also introduce a weak greedy algorithm which  uses this perturbed minimal residual method for the computation of successive greedy corrections in small tensor subsets. We prove its convergence under some conditions on the parameters of the algorithm. 
The proposed numerical method is applied to the solution of a stochastic partial differential equation which is discretized using standard Galerkin methods in tensor product spaces. 
\end{abstract}

\pagestyle{myheadings}
\thispagestyle{plain}
\markboth{Marie Billaud-Friess, Anthony Nouy and Olivier Zahm}{Tensor approximation based on ideal minimal residual formulations}

\section*{Introduction}

Low-rank tensor approximation methods are receiving growing attention in computational science for the numerical solution of high-dimensional problems formulated in tensor spaces (see the recent surveys \cite{Kolda2009,Chinesta2011,Khoromskij2012,Grasedyck2013} and monograph \cite{Hackbusch2012}). Typical problems include the solution of high-dimensional partial differential equations arising in stochastic calculus, or the solution of stochastic or parametric partial differential equations using functional approaches, where functions of multiple (random) parameters have to be approximated.
These problems take the general form
\begin{align}
A(u)=b,\quad u\in X= X_{1}\otimes \hdots \otimes X_{d},\label{eq:intro_problem}
\end{align}
where $A$ is an operator defined on the tensor space $X$. Low-rank tensor methods then consist in searching an approximation of the solution $u$ in a low-dimensional subset $\Sc_{X}$ of elements of $X$ of the form
\begin{align} 
\sum_{i_{1}} \hdots \sum_{i_{d}} \alpha_{i_1 \hdots i_{d}} w_{i_{1}}^{1}\otimes \hdots \otimes w_{i_{d}}^{d}, \; w^{\mu}_{i_{\mu}} \in X_{\mu},\end{align}
where the set of coefficients $(\alpha_{i_1 \hdots i_{d}})$ possesses some specific structure. 
Classical low-rank tensor subsets include canonical tensors, Tucker tensors, 
{Tensor Train tensors \cite{Oseledets2011,Holtz2012manifolds},} Hierarchical Tucker tensors \cite{Hackbusch2009} or more general tree-based Hierarchical Tucker tensors \cite{Falco2013}. 
{In practice, many tensors arising in applications are observed to be efficiently approximable by elements of the mentioned subsets.} Low-rank approximation methods are closely related to a priori model reduction methods in that they provide approximate representations of the solution on low-dimensional reduced bases $\{w_{i_{1}}^{1}\otimes \hdots \otimes w_{i_{d}}^{d}\}$ that are not selected a priori. 
\\

The best approximation of $u\in X$ in a given low-rank tensor subset $\Sc_{X}$ with respect to a particular norm $\Vert\cdot\Vert_{X}$ in $X$ is the solution of
\begin{align} 
\min_{v\in \Sc_X} \Vert u-v \Vert_X. \label{eq:intro_approximation_SX}
\end{align}
{Low-rank tensor  subsets are neither linear subspaces nor convex sets.} However, they usually satisfy topological properties that make the above best approximation problem meaningful and allows the application of standard optimization algorithms \cite{Rohwedder2013,Espig2012,Uschmajew2012}. 
Of course, in the context of the solution of high-dimensional problems, the solution $u$ of problem \eqref{eq:intro_problem}
is not available, and the best approximation problem \eqref{eq:intro_approximation_SX} cannot be solved
directly. Tensor approximation methods then typically rely on the definition of approximations based on the residual
of equation \eqref{eq:intro_problem}, which is a computable
quantity.  Different strategies have been proposed for the construction of low-rank approximations of the solution of equations in tensor format. The first family of methods consists in using classical iterative
algorithms  for linear  or nonlinear systems of equations  with low-rank tensor algebra (using low-rank tensor compression) for standard algebraic operations \cite{Kressner2011,Khoromskij2011,Matthies2012,Ballani2013}.
The second family of methods consists in directly computing an approximation of $u$ in $\Sc_X$ by minimizing some residual norm \cite{Beylkin2005,Nouy2007,Doostan2009}: 
\begin{align}
\min_{v\in \Sc_X}  \Vert Av-b \Vert_{\star}. \label{eq:intro_minres_SX}
\end{align}
In the context of approximation, where one is interested in obtaining an approximation with a given precision rather than obtaining the best low-rank approximation, 
constructive greedy algorithms have been proposed that consist in computing successive corrections in a small low-rank tensor subset, typically the set of rank-one tensors \cite{Ladeveze1999,Ammar2006,Nouy2007}. These greedy algorithms have been analyzed in several papers  \cite{Ammar2010, Cances2011,Falco2011,Falco2012-pgd,Cances2012,Figueroa2012} and a series of improved algorithms have been introduced in order to increase the quality of suboptimal greedy constructions \cite{Nouy2010,Nouy2010-time,Ladeveze2010,Falco2012-pgd,Giraldi2013}. 
\\

Although minimal residual based approaches are well founded, they generally provide low-rank approximations that can be very far from optimal approximations with respect to the natural norm $\Vert\cdot\Vert_{X}$, at least when using usual measures of the residual.  
If we are interested in obtaining an optimal approximation with respect
to the norm $\Vert\cdot\Vert_X$, e.g. taking into account some particular quantity of interest, an ideal approach would be to define
the residual norm such that
$$
\Vert Av-b \Vert_{\star} = \Vert u-v\Vert_X,
$$
where $\Vert \cdot\Vert_X$ is the desired solution norm{, that corresponds to solve an ideally conditioned problem}. Minimizing the residual norm would therefore be equivalent to solving the best approximation problem \eqref{eq:intro_approximation_SX}. However, the
computation of such a residual norm is in general equivalent to the
solution of the initial problem \eqref{eq:intro_problem}.
\\

In this paper,
we propose a method for the approximation of the
ideal approach. This method applies to a general class of weakly coercive problems. It relies on the use
of approximations $r_{\delta}(v)$ of the residual $r(v)=Av-b$ such that $\Vert r_{\delta}(v)\Vert_{\star}$ approximates the ideal residual norm $\Vert r(v) \Vert_{\star} = \Vert u-v \Vert_{X}$. {The resulting method allows for the construction of low-rank tensor approximations which are quasi-optimal  with respect to a norm $\Vert\cdot\Vert_{X}$ that can be designed according to some quantity of interest.}
We first introduce and analyze an algorithm for minimizing the approximate residual norm $\Vert r_{\delta}(v) \Vert_{\star}$  in a given subset $\Sc_{X}$. This algorithm can be seen as an extension of the algorithms introduced  in \cite{Cohen2012,Dahmen2012} to the context of nonlinear approximation in subsets $\Sc_{X}$. 
{ It consists in a perturbation of a gradient algorithm for minimizing  in $\Sc_{X}$ the ideal residual norm 
$\Vert r(v) \Vert_{\star}$,  using approximations $r_{\delta}(v)$ of the residual $r(v)$. An ideal algorithm would consist in computing an approximation $r_{\delta}(v)$ such that 
\begin{equation}
(1-\delta) \Vert u-v \Vert_{X} \le \Vert r_{\delta}(v) \Vert_{\star} \le (1+\delta) \Vert
u-v\Vert_X, \label{eq:intro_delta}
\end{equation}
for some precision $\delta$, that requires the use of guaranteed error estimators.} {In the present paper, \eqref{eq:intro_delta} is not exactly satisfied since we only use heuristic error estimates.  However, these estimates seem to provide an acceptable measure of the error for the considered applications}.  The resulting algorithm can be interpreted as a preconditioned gradient algorithm with an implicit preconditioner that approximates the ideal preconditioner. 
Also, we propose a weak greedy algorithm for the adaptive construction of
an approximation of the solution of problem \eqref{eq:intro_problem}, using the perturbed ideal minimal residual approach for the computation of greedy corrections. A convergence proof is provided under some conditions on the parameters of the algorithm. \\

The outline of the paper is as follows. In section \ref{sec:weakly_coercive_problem}, we introduce a functional framework for weakly coercive problems. In section  \ref{sec:approx_tensors}, we briefly recall some definitions and basic properties of tensor spaces and low-rank tensor subsets.  In section \ref{sec:minimal_residual}, we present a natural minimal  residual based method for the approximation in a nonlinear subset $\Sc_{X}$, and we analyze a simple gradient algorithm in $\Sc_{X}$. We discuss the conditioning issues that restrict the applicability of such algorithms when usual residual norms are used, and the interest of using an ideal measure of the residual. 
In section \ref{sec:perturbation}, we introduce the perturbed ideal minimal residual approach. A gradient-type algorithm is introduced and analyzed and we prove the convergence of this algorithm towards a neighborhood of the best approximation in $\Sc_{X}$. Practical computational aspects are detailed in section \ref{sec:practical_aspects}. In section \ref{sec:greedy}, we analyze a weak greedy algorithm using the perturbed ideal minimal residual method for the computation of greedy corrections. 
In section \ref{sec:example},
a detailed numerical example will illustrate the proposed method. The example is a stochastic reaction-advection-diffusion problem  which is discretized using Galerkin stochastic methods. In particular, this example will illustrate the  possibility to introduce norms that are adapted to some quantities of interest and the ability of the method to provide (quasi-)best low-rank approximations {in that context.}

\section{Functional framework for weakly coercive problems}\label{sec:weakly_coercive_problem}

\subsection{Notations}
For a given Hilbert space $H$, we denote by $\langle\cdot
,\cdot\rangle_H$ the inner product in $H$ and by $\|\cdot\|_H$ the
associated norm. We denote by $H'$ the topological dual of $H$ and
by $\langle\cdot ,\cdot\rangle_{H',H}$ the duality pairing between
$H$ and $H'$. For $v\in H$ and $\varphi\in H'$, we denote
$\varphi(v)=\langle \varphi, v\rangle_{H',H}$.
We denote by $R_H
: H\rightarrow H'$ the Riesz isomorphism defined by
$$
 \langle v,w\rangle_{H} = \langle v , R_H w\rangle_{H,H'} =
 \langle R_H v , w\rangle_{H',H} =\langle
R_H v , R_H w\rangle_{H'} \quad \forall v,w\in H.
$$

\subsection{Weakly coercive problems}

We denote by $X$ (resp. $Y$) a Hilbert space equipped with inner
product $\langle\cdot ,\cdot\rangle_X$ (resp. $\langle\cdot
,\cdot\rangle_Y$) and associated norm $\|\cdot\|_X$ (resp.
$\|\cdot\|_Y$). Let $a:X\times Y \rightarrow\xR$ be a bilinear form
and let $b\in Y'$ be a continuous linear form on $Y$. We consider
the variational problem: find $u\in X$ such that
\begin{align}
a(u,v)= b(v)\quad \forall v\in Y.\label{eq:problem_var}
\end{align}
We assume that $a$ is continuous and weakly coercive, that means
that there exist constants $\alpha$ and $\beta$ such that
\begin{align}
&\sup_{v\in X}\sup_{w\in Y} \frac{a(v,w)}{\Vert v\Vert_X\Vert
w\Vert_Y} = \beta <+\infty,\label{eq:a_continuous}\\
&\adjustlimits\inf_{v\in X}\sup_{w\in Y} \frac{a(v,w)}{\Vert
v\Vert_X\Vert w\Vert_Y} = \alpha >0,
 \label{eq:bounding_A}\end{align}
 and
\begin{align}
&\sup_{v\in X} \frac{a(v,w)}{\Vert v\Vert_X} >0\quad \forall w\neq 0
\text{ in } Y.
 \label{eq:injective_A}
\end{align}
We introduce the linear continuous operator $A:X\rightarrow Y'$ such
that for all $(v,w)\in X\times Y$,
$$
a(v,w) = \langle Av,w\rangle_{Y',Y}.
$$
We denote by $A^*:Y\rightarrow X'$ the adjoint of $A$, defined by
$$
\langle Av,w\rangle_{Y',Y} = \langle v,A^*w\rangle_{X,X'}\quad
\forall (v,w)\in X\times Y.
$$
Problem \eqref{eq:problem_var} is therefore equivalent to find $u\in
X$ such that
\begin{align}
Au=b. \label{eq:problem}
\end{align}
Properties \eqref{eq:a_continuous},\eqref{eq:bounding_A} and
\eqref{eq:injective_A} imply that $A$ is a norm-isomorphism from $X$
to $Y'$ such that for all $v\in X$,
\begin{align}
\alpha\Vert v\Vert_X \le \Vert Av\Vert_{Y'} \le  \beta \Vert
v\Vert_X \label{eq:Av_bounds}
\end{align}
ensuring the well-posedness of problem
 \eqref{eq:problem}\cite{Ern2004}. {The norms of $A$ and its inverse $A^{-1}$ are such that 
 $\Vert A\Vert_{X\rightarrow Y'}  = \beta$ and $\Vert
A^{-1}\Vert_{Y'\rightarrow X}=\alpha^{-1}$.  
Then, the condition number of the operator $A$ 
is 
$$\kappa(A) = \Vert
A\Vert_{X\rightarrow Y'} \Vert A^{-1}\Vert_{Y'\rightarrow X} = \frac{\beta}{\alpha} \ge 1.
$$}

\section{Approximation in low-rank tensor subsets}\label{sec:approx_tensors}

\subsection{Hilbert tensor spaces} We here briefly recall basic definitions on
Hilbert tensor spaces (see \cite{Hackbusch2012}). We consider Hilbert spaces $X_\mu$, $1\le \mu\le d$, equipped with
norms $\Vert\cdot\Vert_{X_\mu}$ and associated inner products
$\langle\cdot,\cdot\rangle_\mu$\footnote{e.g. $X_\mu=\xR^{n_\mu}$ equipped
with the Euclidian norm, or $X_\mu = H^{k}_0(\Omega_\mu$), $k\ge 0$, a Sobolev
space of functions defined on a domain  $\Omega_{\mu}$.}. We denote by
$\otimes_{\mu=1}^d v^\mu = v^1\otimes \hdots\otimes v^d$, $v^\mu\in X_\mu$, an
elementary tensor. We then define the algebraic tensor product space as the
linear span of elementary tensors: $${}_a\bigotimes_{\mu=1}^d X_\mu =
\mathrm{span}\{\otimes_{\mu=1}^d v^\mu\; :\; v^\mu\in X_\mu,1\le \mu\le d\}.$$
A Hilbert tensor space $X$ equipped with the norm $\Vert\cdot\Vert_X$ is then
obtained by the completion with respect to $\Vert\cdot\Vert_X$ of the algebraic
tensor space, i.e. $$X= \overline{{}_{a}\bigotimes_{\mu=1}^d X_\mu}^{\Vert
\cdot\Vert_X} = {}_{\Vert \cdot\Vert_X}\bigotimes_{\mu=1}^d X_\mu. $$ Note that
for finite dimensional tensor spaces, the resulting space $X$ is independent of
the choice of norm and coincides with the normed algebraic tensor space.

A natural inner product on $X$ is induced by inner products
$\langle\cdot,\cdot\rangle_\mu$ in $X_\mu$, $1\le\mu\le d$. It is
defined for $v=\otimes_{\mu=1}^d v^\mu$ and $w=\otimes_{\mu=1}^d
w^\mu$ by
$$
\langle v,w\rangle_X = \prod_{\mu=1}^d \langle
v^\mu,w^\mu\rangle_\mu
$$
and extended by linearity on the whole algebraic tensor space. This
inner product is called the \emph{induced (or canonical) inner
product} and the associated norm the \emph{induced (or canonical)
norm.}

\subsection{Classical low-rank tensor subsets}\label{sec:tensor_subsets}
Low-rank tensor subsets $\Sc_X$ of a tensor space
$X={}_{\Vert\cdot\Vert}\bigotimes_{\mu=1}^d X_\mu$ are subsets of
the algebraic tensor space ${}_{a}\bigotimes_{\mu=1}^d X_\mu$, which
means that elements $v\in \Sc_X$ can be written under the form
\begin{align}
v = \sum_{i_1\in I_1}\hdots\sum_{i_d\in I_d} \alpha_{i_1,\hdots,i_d}
\bigotimes_{\mu=1}^d v_{i_\mu}^\mu,\label{eq:tensor_form}
\end{align}
where $\alpha = (\alpha_i)_{i\in I} \in \xR^I$, with $I :=
I_1\times \hdots \times I_d$, is a set of real coefficients that
possibly satisfy some constraints, and $(v_{i_\mu}^\mu)_{i_\mu\in
I_\mu}\in (X_\mu)^{I_\mu}$, for $1\le \mu\le d $, is a set of
vectors that also possibly satisfy some constraints (e.g.
orthogonality).

Basic low-rank tensor subsets are the set of tensors with canonical rank bounded by $r$:
$$
\Rc_r(X) = \left\{v=\sum_{i=1}^r \otimes_{\mu=1}^d
v_i^\mu\;:\;v_i^\mu \in X_\mu\right\},
$$
and the set of Tucker tensors with multilinear rank bounded by $r=(r_1,\hdots,r_d)$:
\begin{align*}
\Tc_r(X)&= \left\{v=\sum_{i_1=1}^{r_1} \hdots \sum_{i_d=1}^{r_d}
\alpha_{i_1,\hdots,i_d} \otimes_{\mu=1}^d
v_{i_\mu}^\mu\;:\;v_{i_\mu}^\mu \in X_\mu,\;
\alpha_{i_1,\hdots,i_d}\in\xR\right\}
\end{align*}
Other low-rank tensor subsets have been recently introduced, such as Tensor
{Train tensors \cite{Oseledets2011,Holtz2012manifolds}} or more general tree-based Hierarchical Tucker
tensors \cite{Hackbusch2009,Falco2013}, these tensor subsets corresponding to a form
\eqref{eq:tensor_form} with a particular structure of tensor
$\alpha$. Note that for the case $d=2$, all the above tensor subsets 
coincide.

\begin{rmrk}\label{rem:discretization} From a numerical point of view, the
approximate solution of the variational  problem  \eqref{eq:problem_var}
requires an additional discretization which consists in  introducing an
approximation space $\widetilde X = \otimes_{\mu=1}^d \widetilde  X_\mu$, where
the $\widetilde X_\mu\subset X_\mu$ are finite dimensional approximation spaces
  (e.g. finite element spaces). Then, approximations are searched in  low-rank
tensor subsets $\Sc_X$ of $\widetilde X$  (e.g. $\Rc_r(\widetilde X)$ or
$\Tc_{r}(\widetilde  X))$, thus introducing two levels of discretizations. In
the following, we adopt a general  point of view where $X$ can either denote an
infinite dimensional space, an  approximation space obtained after the
discretization of the variational  problem, or even finite dimensional
Euclidian spaces for problems written in an  algebraic form. 
\end{rmrk}

\subsection{Best approximation in tensor subsets}
Low-rank tensor approximation methods consist in computing an approximation
of a tensor $u\in X$ in a suitable low-rank subset $\Sc_X$ of $X$.
The best approximation of $u$ in $\Sc_X$ is defined by
\begin{align}
\min_{v\in \Sc_X} \Vert u-v\Vert_X. \label{eq:best_approximation_SX}
\end{align}
{The previously mentioned classical tensor subsets are neither linear subspaces nor convex sets.}
However, they usually satisfy properties that give sense to the
above best approximation problem. We consider the case that $\Sc_X$
satisfies the following properties:
\begin{align}
&\text{$\Sc_X$ is weakly closed (or simply closed in finite dimension),} 
\label{SX_weakly_closed} \\
&\text{$\Sc_X\subset \gamma \Sc_X$ for all
$\gamma\in\xR$.}\label{SX_cone} 
\end{align}
Property \eqref{SX_cone} is satisfied by all the classical tensor
subsets mentioned above (canonical tensors, Tucker and tree-based Hierarchical
Tucker tensors). Property \eqref{SX_weakly_closed} ensures the
existence of solutions to the best approximation problem
\eqref{eq:best_approximation_SX}. This property, under some suitable
conditions on the norm $\Vert\cdot\Vert_X$ (which is naturally
satisfied in finite dimension), is verified by most tensor subsets
used for approximation (e.g. the set of  tensors with bounded canonical rank
for $d=2$, the set of elementary tensors $\Rc_1$ for $d\ge 2$
\cite{Falco2011}, the sets of Tucker or tree-based Hierarchical Tucker tensors
\cite{Falco2012-minimal}).\\
\par

We then introduce the set-valued map $\Pi_{\Sc_X}:X\rightarrow
2^{\Sc_X}$ that
 associates to an element $u\in X$ the set of best approximations
 of $u$ in $\Sc_X$:
\begin{align}
\Pi_{\Sc_X}(u) = \arg\min_{v\in\Sc_X}\Vert u-v\Vert_X.\label{eq:defPiSX}
\end{align}
Note that if $\Sc_X$ were a closed linear subspace or a closed
convex set of $X$, then $\Pi_{\Sc_X}(u)$ would be a singleton and
$\Pi_{\Sc_X}$ would coincide with the classical definition of the
metric projection on $\Sc_X$. Property \eqref{SX_cone} still implies
the following property of projections: for all $v\in X$ and for all
$w\in \Pi_{\Sc_X}(v)$,
\begin{align}
\Vert v-w \Vert_X^2 = \Vert v \Vert_X^2 - \Vert w\Vert_X^2 \quad
 \text{with} \quad \Vert w \Vert_X = \sigma(v;\Sc_X) =
\max_{z\in \Sc_X} \frac{\langle v,z\rangle_{X}}{\Vert z\Vert_X} .
\end{align}
$\Pi_{\Sc_X}(v)$ is therefore a subset of the sphere of radius
$\sigma(v;\Sc_X)$ in $X$. In the following, we will use the
following abuse of notation: for a subset $S\subset X$ and for $w\in
X$, we define
$$
\Vert S - w\Vert_X := \sup_{v\in S} \Vert v-w\Vert_X
$$
With this convention, we have $\Vert \Pi_{\Sc_X}(v)\Vert_X =
\sigma(v;\Sc_X)$ and \begin{align}\Vert \Pi_{\Sc_X}(v) - v\Vert_X^2
= \Vert v\Vert_X^2 - \Vert \Pi_{\Sc_X}(v)\Vert_X^2.
\end{align}

\section{Minimal residual based approximation}\label{sec:minimal_residual}

We now consider that problem \eqref{eq:problem} is formulated in
tensor Hilbert spaces $X={}_{\Vert \cdot\Vert_X}\bigotimes_{\mu=1}^d
X_\mu$ and $Y={}_{\Vert \cdot\Vert_Y}\bigotimes_{\mu=1}^d Y_\mu.$
The aim is here to find an approximation of the solution $u$ of
problem \eqref{eq:problem} in a given tensor subset $\Sc_X\subset
X$.

\subsection{Best approximation with respect to residual norms}
Since the solution $u$ of problem \eqref{eq:problem} is not
available, the best approximation problem
\eqref{eq:best_approximation_SX} cannot be solved directly.
However, tensor approximations can be defined using the residual of
equation \eqref{eq:problem}, which is a computable information.  An
approximation of $u$ in $\Sc_X$ is then defined by the minimization
of a residual norm:
\begin{align}
\min_{v\in\Sc_X} \Vert Av-b\Vert_{Y'} =\min_{v\in\Sc_X} \Vert
A(v-u)\Vert_{Y'} .\label{eq:minimization_SX_residual}
 \end{align}
Assuming that we can define a tangent space $T_v (\Sc_X)$ to $\Sc_X$
at $v\in\Sc_X$, the stationarity condition of functional $J:
v\mapsto \Vert A(v-u)\Vert_{Y'}^2$ at $v\in\Sc_X$ is 
$$\langle J'(v),\delta v\rangle_{X',X} = 0\quad \forall \delta v\in
T_v(\Sc_X),
$$
or equivalently, noting that the gradient of $J$ at $v$ is $J'(v) =
A^*R_Y^{-1}(Av-b)\in X'$,
$$\langle Av-b,A\delta v\rangle_{Y'} =
0\quad \forall \delta v\in T_v(\Sc_X).
$$

\subsection{Ideal choice of the residual norm}\label{sec:ideal_norms}
When approximating $u$ in $\Sc_X$ using
\eqref{eq:minimization_SX_residual}, the obtained approximation
depends on the choice of the residual norm.
If we want to find a best approximation of $u$ with respect to the norm $\Vert\cdot\Vert_X$,
then the residual norm should be chosen \cite{Cohen2012,Dahmen2012} such that
$$
\Vert A(v-u)\Vert_{Y'} = \Vert v-u\Vert_X \quad \forall v\in X,
$$
or equivalently such that the following relation between inner
products holds:
\begin{align}
\langle v,w\rangle_X = \langle Av,Aw\rangle_{Y'}\quad  \forall
v,w\in X. \label{eq:inner_product_relation_1}
\end{align}
This implies
\begin{align*} \langle v,w\rangle_X &= \langle
Av,R_Y^{-1}Aw\rangle_{Y',Y}
=\langle v,R_X^{-1}A^* R_Y^{-1}Aw\rangle_{X},
\end{align*}
for all $v,w\in X$, and therefore, by identification,
\begin{align}
I_X = R_X^{-1}A^* R_Y^{-1}A  \Leftrightarrow R_Y = A R_X^{-1}A^*
\Leftrightarrow R_X = A^* R_Y^{-1}A \label{eq:relation_RX_RY} .
\end{align}
Also, since
\begin{align*}
\langle v,w\rangle_Y &= \langle R_Y v,w\rangle_{Y',Y} =
\langle A R_X^{-1}A^* v,w\rangle_{Y',Y} \\
&= \langle R_X^{-1}A^* v, A^* w\rangle_{X,X'} = \langle A^* v,
A^*w\rangle_{X'}
\end{align*}
for all $v,w\in Y$, we also have that
\eqref{eq:inner_product_relation_1} is equivalent to the following
relation:
\begin{align}
\langle v,w\rangle_{Y} = \langle A^*v,A^*w\rangle_{X'}\quad \forall
v,w\in Y. \label{eq:inner_product_relation_2}
\end{align}
Note that \eqref{eq:inner_product_relation_1} and
\eqref{eq:inner_product_relation_2} respectively impose
\begin{align}
\Vert v\Vert _X &=\Vert Av\Vert _{Y'} \text{ and } \Vert w\Vert _Y = \Vert A^*w\Vert _{X'}.
 \label{eq:norms}
\end{align}
This choice implies that the weak coercivity and continuity constants are such that $\alpha=\beta=1$, and therefore
$$\kappa(A)=1,$$ meaning that problem \eqref{eq:problem} is
ideally conditioned.
\\\par
 In
practice, we will first define the inner product $\langle
\cdot,\cdot\rangle_X$ and the other inner product $\langle
\cdot,\cdot\rangle_Y$ will be deduced from
\eqref{eq:inner_product_relation_2}.

\begin{xmpl}
Consider that $X=Y$ and let $A=B+C$ with $B$ a symmetric coercive
and continuous operator and $C$ a skew-symmetric operator. We equip
$X$ with inner product $\langle v,w\rangle_X = \langle B
v,w\rangle_{X',X}$, which corresponds to $R_X=B$. Therefore,
$$
\Vert v\Vert_Y^2 = \Vert A^*v \Vert_{X'}^2 = \Vert B v \Vert_{X'}^2
 + \Vert C v \Vert_{X'}^2 = \Vert v \Vert_{X}^2
 + \Vert C v \Vert_{X'}^2.
$$
$\Vert v\Vert_Y $ corresponds to the graph norm of the
skew-symmetric part $C$ of the operator $A$. When $C=0$, we simply have
$\Vert v\Vert_Y^2 = \Vert v\Vert_X^2$.
\end{xmpl}
\begin{xmpl}[Finite dimensional problem]
Consider the case of finite dimensional tensor spaces $X = Y =
\xR^{n_1\times \hdots \times n_d}$, e.g. after a discretization
step for the solution of a high-dimensional partial differential equation. The duality pairings are induced by
the standard canonical inner product. We can choose for $\langle
\cdot,\cdot \rangle_X $ the canonical inner product on
$\xR^{n_1\times \hdots \times n_d}$, which corresponds to
$R_X=I_X$, the identity on $X$. Then, inner product on $Y$ is
defined by relation \eqref{eq:inner_product_relation_2}, which
implies
$$
\langle v,w\rangle_{Y} = \langle A^*v, A^*w\rangle_{X}
\quad\text{and} \quad R_Y = AA^*.
$$
\label{ex:symmetric}
\end{xmpl}

\subsection{Gradient-type algorithm}\label{sec:gradient_algorithm}

For solving \eqref{eq:minimization_SX_residual}, we consider the following basic gradient-type algorithm: letting $u^0=0$, we construct a sequence
$\{u^k\}_{k\ge 0}$ in $\Sc_X$ and a sequence $\{y^k\}_{k\ge 0}$ in
$Y$ defined for $k\ge 0$ by
 \begin{equation}\label{eq:gradient_algorithm}
 \left\{ \begin{aligned}
  &y^k = R_Y^{-1}(Au^k-b)\\
  &u^{k+1} \in \Pi_{\Sc_X}(u^k - \rho R_X^{-1} A^* y^k)
  \end{aligned}\right.
\end{equation}
 with $\rho>0$. Equations \eqref{eq:gradient_algorithm} yield
 \begin{align*}
  u^{k+1} \in \Pi_{\Sc_X}(u + B_\rho(u^k-u)),
 \end{align*}
 with $B_\rho=I_X-\rho R_X^{-1} A^*R_Y^{-1}A$ a symmetric operator from $X$ to $X$.
 For all $v\in X$, $$\frac{\langle B_\rho v,v \rangle_X}{\Vert
v\Vert_X^2} = 1-\rho \frac{\Vert Av\Vert_{Y'}^2}{\Vert
v\Vert_X^2}.$$
{Here, we assume that $\Vert\cdot\Vert_X$ and $\Vert\cdot\Vert_Y$ do not necessarily satisfy the relation \eqref{eq:norms} (i.e. $\frac{\alpha}{\beta}\neq 1$)}.
From \eqref{eq:Av_bounds}, we deduce that the
eigenvalues of $B_\rho$ are in the interval
$[1-\rho\beta^2,1-\rho\alpha^2]$. The spectral radius of $B_\rho$ is
therefore bounded by $$\gamma(\rho) = \max\{\vert
1-\rho\beta^2\vert,\vert1-\rho\alpha^2\vert\}.$$

\begin{prpstn}\label{prop:gradient_algorithm_convergence}
Assuming $\gamma(\rho)<1/2$, the sequence $\{u^k\}_{k\ge 1}$ defined
by \eqref{eq:gradient_algorithm} is such that
\begin{align}
\Vert u^{k} - u\Vert_X &\le (2\gamma)^{k} \Vert u^{0} - u\Vert_X +
\frac{1}{1-2\gamma}\Vert u -
\Pi_{\Sc_X}(u)\Vert_X\label{eq:gradient_algorithm_convergence_iterates}
\end{align}
and
\begin{align}
\lim\sup_{k\to\infty}\Vert u^k - u\Vert_X \le \frac{1}{1-2\gamma}
\Vert u - \Pi_{\Sc_X}(u)\Vert_X\label{eq:gradient_algorithm_limsup}
\end{align}
\end{prpstn}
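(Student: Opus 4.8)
The plan is to reduce the entire statement to a single scalar recurrence on the error $a_k := \Vert u^k - u\Vert_X$ and then sum a geometric series. Writing $w^k := u + B_\rho(u^k-u)$, the algorithm reads $u^{k+1}\in\Pi_{\Sc_X}(w^k)$, so the triangle inequality gives
\[
\Vert u^{k+1}-u\Vert_X \le \Vert u^{k+1}-w^k\Vert_X + \Vert w^k - u\Vert_X .
\]
First I would control the second term using that $B_\rho$ is symmetric: the operator norm of a bounded self-adjoint operator on $X$ equals its spectral radius, which was bounded above by $\gamma$, whence $\Vert w^k - u\Vert_X = \Vert B_\rho(u^k-u)\Vert_X \le \gamma\,\Vert u^k-u\Vert_X$.

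For the first term I would exploit the optimality of the projection. Since $u^{k+1}$ minimizes $\Vert\,\cdot\, - w^k\Vert_X$ over $\Sc_X$ (existence being guaranteed by the weak closedness \eqref{SX_weakly_closed}), for any fixed $z\in\Pi_{\Sc_X}(u)$ one has $\Vert u^{k+1}-w^k\Vert_X \le \Vert z - w^k\Vert_X$, and then, by the triangle inequality together with the bound just obtained,
\[
\Vert u^{k+1}-w^k\Vert_X \le \Vert z-u\Vert_X + \Vert u - w^k\Vert_X \le \Vert u-\Pi_{\Sc_X}(u)\Vert_X + \gamma\,\Vert u^k-u\Vert_X ,
\]
where I use that $\Vert z-u\Vert_X = \Vert u-\Pi_{\Sc_X}(u)\Vert_X$ since all best approximations are equidistant from $u$. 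Setting $e := \Vert u-\Pi_{\Sc_X}(u)\Vert_X$ and adding the two estimates yields the affine contraction
\[
a_{k+1} \le 2\gamma\, a_k + e .
\]

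It then remains to unroll this recurrence. Because $\gamma<1/2$ the factor $2\gamma<1$, so iterating gives $a_k \le (2\gamma)^k a_0 + e\sum_{j=0}^{k-1}(2\gamma)^j \le (2\gamma)^k a_0 + \frac{e}{1-2\gamma}$, which is exactly \eqref{eq:gradient_algorithm_convergence_iterates}; letting $k\to\infty$ and using $(2\gamma)^k\to0$ then gives the $\limsup$ bound \eqref{eq:gradient_algorithm_limsup}. I expect the only genuinely delicate point to be the handling of the set-valued map $\Pi_{\Sc_X}$: one must verify that the optimality inequality and the equidistance of best approximations hold for the multivalued projection, and that the abuse of notation $\Vert u-\Pi_{\Sc_X}(u)\Vert_X$ indeed coincides with the best-approximation error $\min_{v\in\Sc_X}\Vert u-v\Vert_X$. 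Everything else is a routine geometric-series argument resting on the spectral bound for $B_\rho$.
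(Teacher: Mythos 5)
Your proof is correct and follows essentially the same route as the paper's: the same triangle-inequality split through $w^k = u + B_\rho(u^k-u)$, the same use of projection optimality against an element of $\Pi_{\Sc_X}(u)$, the same spectral bound $\Vert B_\rho v\Vert_X \le \gamma \Vert v\Vert_X$, and the same affine recurrence $a_{k+1}\le 2\gamma a_k + e$ unrolled as a geometric series. The only cosmetic difference is that you fix a single $z\in\Pi_{\Sc_X}(u)$ and invoke equidistance of best approximations, whereas the paper takes a supremum over $\Pi_{\Sc_X}(u)$ using its set-valued notation; both handle the multivalued projection correctly.
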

\begin{proof}
Denoting $v^k =u^k-u$, we have
\begin{align*}
\Vert u^{k+1} - u\Vert_X &\le \Vert \Pi_{\Sc_X}(u + B_\rho v^k)- u\Vert_X\\
&\le\Vert \Pi_{\Sc_X}(u + B_\rho v^k)- (u + B_\rho v^k)\Vert_X +
\Vert  B_\rho v^k\Vert_X \\
&\le \Vert w-(u + B_\rho v^k)\Vert_X+ \Vert  B_\rho v^k\Vert_X
\end{align*}
for all $w\in\Sc_X$. In particular, this inequality is true for all
$w\in\Pi_{\Sc_X}(u)$, and therefore, taking the supremum over all
$w\in \Pi_{\Sc_X}(u)$, we obtain
\begin{align*}
\Vert u^{k+1} - u\Vert_X &\le \Vert \Pi_{\Sc_X}(u)-(u + B_\rho
v^k)\Vert_X+ \Vert  B_\rho v^k\Vert_X
\\
&\le\Vert \Pi_{\Sc_X}(u)-u \Vert_X+ 2 \Vert  B_\rho v^k\Vert_X
\end{align*}
Since $\Vert B_\rho v\Vert_X\le \gamma \Vert v\Vert_X$ for all $v\in
X$ and since $2\gamma<1$, we have
\begin{align*}
\Vert u^{k+1} - u\Vert_X &\le\Vert \Pi_{\Sc_X}(u)-u \Vert_X+ 2
\gamma \Vert u-u^k\Vert_X\\
&\le  (2\gamma)^{k+1} \Vert u^{0} - u\Vert_X +
\frac{1-(2\gamma)^{k+1}}{1-2\gamma}\Vert u - \Pi_{\Sc_X}(u)\Vert_X
\end{align*}
from which we deduce
\eqref{eq:gradient_algorithm_convergence_iterates} and
\eqref{eq:gradient_algorithm_limsup}.
\end{proof}

The condition $\gamma(\rho)<1/2$ imposes $\frac{\beta}{\alpha}<
\sqrt{3}$ and $\rho\in(\frac{1}{2\alpha^2},\frac{3}{2\beta^2})$. The
condition $\frac{\beta}{\alpha}< \sqrt{3}$ is a very restrictive
condition which is in general not satisfied without an excellent
preconditioning of the operator $A$.

However, with the ideal choice of norms introduced in the previous
section (equation \eqref{eq:norms}), we have $\alpha=\beta=1$ and
$B_\rho=(1-\rho)I_X$. That means that the problem is ideally
conditioned and we have convergence for all
$\rho\in[\frac{1}{2},\frac{3}{2}]$ towards a neighborhood of
$\Pi_{\Sc_X}(u)$ of size $\frac{2\gamma}{1-2\gamma}\Vert u -
\Pi_{\Sc_X}(u)\Vert_X$ with $\gamma = \vert 1-\rho\vert$.
\begin{crllr}
Assume that \eqref{eq:norms} is satisfied. Then, if $\rho
\in[\frac{1}{2},\frac{3}{2}]$, the sequence $\{u^k\}_{k\ge 1}$
defined by \eqref{eq:gradient_algorithm} verifies
\eqref{eq:gradient_algorithm_convergence_iterates} and
\eqref{eq:gradient_algorithm_limsup} with $\gamma(\rho)=\vert
1-\rho\vert$. Moreover, if $\rho=1$, then $u^1\in\Pi_{\Sc_X}(u)$,
which means that the algorithm converges in one iteration {for any}
 initialization $u^0$.
\end{crllr}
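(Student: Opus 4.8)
The plan is to reduce the whole statement to a single algebraic simplification: under the ideal norm assumption \eqref{eq:norms}, the symmetric operator $B_\rho$ that governs the iteration collapses to a scalar multiple of the identity, after which both claims follow by direct substitution into Proposition \ref{prop:gradient_algorithm_convergence}. First I would recall that \eqref{eq:norms} is equivalent to the relations \eqref{eq:relation_RX_RY}, in particular $R_X = A^* R_Y^{-1} A$, and that it forces $\alpha=\beta=1$ as already noted. Substituting $R_X = A^* R_Y^{-1} A$ into the definition $B_\rho = I_X - \rho R_X^{-1} A^* R_Y^{-1} A$ gives $R_X^{-1} A^* R_Y^{-1} A = R_X^{-1} R_X = I_X$, hence
\begin{align*}
B_\rho = (1-\rho)\, I_X .
\end{align*}
Since $B_\rho$ is now an exact scalar multiple of the identity, the spectral radius bound $\gamma(\rho)$ of the general case becomes exact, namely $\gamma(\rho) = |1-\rho|$.

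Next I would feed this value into Proposition \ref{prop:gradient_algorithm_convergence}. Its hypothesis $\gamma(\rho)<1/2$ reads $|1-\rho|<1/2$, i.e. $\rho\in(\tfrac12,\tfrac32)$, and under this condition the two estimates \eqref{eq:gradient_algorithm_convergence_iterates} and \eqref{eq:gradient_algorithm_limsup} hold verbatim with $\gamma = |1-\rho|$, which is the first assertion. I would note in passing that at the endpoints $\rho=\tfrac12,\tfrac32$ one has $2\gamma=1$, so the constant $\tfrac{1}{1-2\gamma}$ degenerates and the estimates are only informative on the open interval; the closed-interval statement should be read with this understanding.

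For the special value $\rho=1$ I would use $B_1 = 0$ directly. Writing the iteration in the form $u^{k+1}\in\Pi_{\Sc_X}(u+B_\rho(u^k-u))$ established in the excerpt, with $\rho=1$ this becomes $u^{k+1}\in\Pi_{\Sc_X}(u+0)=\Pi_{\Sc_X}(u)$, independently of $u^k$; in particular $u^1\in\Pi_{\Sc_X}(u)$ for any initialization $u^0$, which is exactly the claimed one-step convergence to the set of best approximations. There is no genuine obstacle here: the corollary is an immediate specialization of the preceding proposition, and the only step carrying real content is the identity $B_\rho=(1-\rho)I_X$, which rests entirely on the relation $R_X = A^* R_Y^{-1} A$ supplied by \eqref{eq:relation_RX_RY}.
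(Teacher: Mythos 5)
Your proof is correct and follows essentially the same route as the paper, which establishes $B_\rho=(1-\rho)I_X$ from $R_X=A^*R_Y^{-1}A$ (so $\alpha=\beta=1$ and $\gamma(\rho)=\vert 1-\rho\vert$) and then invokes Proposition \ref{prop:gradient_algorithm_convergence}, with the $\rho=1$ case reducing to $u^1\in\Pi_{\Sc_X}(u+0)$. Your remark that the bounds degenerate at the endpoints $\rho=\tfrac12,\tfrac32$ (where $2\gamma=1$) is a fair caveat about the paper's closed-interval phrasing, but it does not change the substance of the argument.
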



\section{Perturbation of the ideal
approximation}\label{sec:perturbation}

We now consider that function spaces $X$ and $Y$ are equipped with
norms satisfying the ideal condition
\begin{align}
\Vert Av\Vert_{Y'} = \Vert v\Vert_X\quad\forall v\in
X.\label{eq:norms_constraint}
\end{align}
The solution of problem \eqref{eq:minimization_SX_residual} using
this ideal choice of norms is therefore equivalent to the best
approximation problem \eqref{eq:best_approximation_SX}, i.e.
\begin{align}
\min_{v\in\Sc_X} \Vert Av-b\Vert_{Y'} = \min_{v\in\Sc_X} \Vert
v-u\Vert_X. \label{eq:min_res_equivalent}
\end{align}
Unfortunately, the computation of the solution of
\eqref{eq:min_res_equivalent} would require the solution of the
initial problem. We here propose to introduce a computable
perturbation of this ideal approach.

\subsection{Approximation of the ideal approach}
Following the idea of \cite{Cohen2012}, the problem \eqref{eq:min_res_equivalent} is replaced by the following problem:
\begin{align}
\min_{v\in\Sc_X} \Vert \Lambda^\delta(R_Y^{-1}(Av-b))\Vert_{Y},
\label{eq:min_res_approximation}
\end{align}
where $\Lambda^\delta:Y\rightarrow Y$ is a mapping that provides an approximation $\Lambda^\delta(r)$ of 
the residual 
 $r= R_Y^{-1}(Av-b)\in Y$ with a controlled
relative precision $\delta>0$, i.e.
$\Vert \Lambda^\delta(r) -r\Vert_Y \le \delta \Vert
r\Vert_Y.
$ 
We will then assume
that the mapping $\Lambda^\delta$ is such that:
\begin{align}
\Vert \Lambda^\delta(y) -y\Vert_Y \le \delta \Vert y\Vert_Y, \quad
\forall y\in \Dc_Y = \left\{R_Y^{-1}(Av-b);v\in\Sc_X
\right\}.\label{eq:delta_prox_Lambda}
\end{align}
As we will see in the following algorithm, it is sufficient for
$\Lambda^\delta$ to well approximate residuals that are in the
subset $\Dc_Y$ whose content depends
on the chosen subset $\Sc_X$ and on the operator and right-hand side
of the problem.


\subsection{Quasi-optimal approximations in $\Sc_X$}
 Here we consider the case where we are not able to solve
the best approximation problem in $\Sc_X$ exactly, because there is
no available algorithm for computing a global optimum, or because
the algorithm has been stopped at a finite precision (see section
\ref{sec:best_approx_comput} for practical comments). We introduce a
set of quasi-optimal approximations $\Pi_{\Sc_X}^\eta(u)\subset
\Sc_X$ such that
\begin{align}
\Vert u- \Pi_{\Sc_X}^\eta(u)\Vert_X\le \eta \Vert u-
\Pi_{\Sc_X}(u)\Vert_X\quad  (\eta\ge 1).\label{eq:PiSX_eta}
\end{align}

\begin{rmrk}
Note that by introducing this new perturbation, we are able to
remove the assumption that $\Sc_X$ is closed and to handle the case
where the problem \eqref{eq:min_res_equivalent}
does not have a
solution, i.e. $\Pi_{\Sc_X}(u)=\emptyset$. In this case, we have to
replace $\Vert u- \Pi_{\Sc_X}(u)\Vert_X$ by $\inf_{w\in\Sc_X} \Vert
u- w\Vert_X$ in equation \eqref{eq:PiSX_eta}.
\end{rmrk}

\begin{rmrk}\label{rem:discretization_quasioptimal}
Note that if $\Sc_X$ denotes a low-rank subset of an infinite dimensional space $X$, 
additional approximations have to be introduced from a numerical point of view 
(see remark \ref{rem:discretization}). These additional approximations could be 
also
considered as a perturbation leading to quasi-optimal approximations, where $\eta$ 
takes into account the approximation errors. In numerical examples, we will not adopt this 
point of view and we will consider $X$ as the approximation space and the approximate solution 
in
$X$ of the variational problem will serve as a reference solution.
\end{rmrk}


%
%

\subsection{Perturbed gradient-type algorithm}\label{sec:Perturbed_gradient_algorithm}
For solving \eqref{eq:min_res_approximation}, we now introduce an
algorithm which can be seen as a perturbation of
  the ideal gradient-type
algorithm \eqref{eq:gradient_algorithm} introduced in section
\ref{sec:gradient_algorithm}. Letting $u^0=0$, we construct a
sequence $\{u^k\}_{k\ge 0}\subset \Sc_X$ and a sequence
$\{y^k\}_{k\ge 0}\subset Y$ defined for $k\ge 0$ by
 \begin{equation}\label{eq:gradient_algorithm_perturbed}
 \left\{ \begin{aligned}
  &y^k = \Lambda^\delta(R_Y^{-1}(Au^k-b))\\
  &u^{k+1} \in \Pi^\eta_{\Sc_X}(u^k - R_X^{-1} A^* y^k)
  \end{aligned}\right.
\end{equation}

\begin{prpstn}\label{prop:gradient_algorithm_perturbed_convergence}
Assume \eqref{eq:norms_constraint}, \eqref{eq:delta_prox_Lambda}, and
\eqref{eq:PiSX_eta}, with $\delta(1+\eta)<1$. Then, the
sequence $\{u^k\}_{k\ge 1}$ defined by
\eqref{eq:gradient_algorithm_perturbed} is such that
\begin{align}
 \Vert u^{k} - u\Vert_X &\le
((1+\eta)\delta)^{k} \Vert u^{0} - u\Vert_X +
\frac{\eta}{1-\delta(1+\eta)}\Vert u -
\Pi_{\Sc_X}(u)\Vert_X.
\label{eq:gradient_algorithm_perturbed_convergence_iterates}
\end{align}
\end{prpstn}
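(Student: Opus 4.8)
The plan is to follow the structure of the proof of Proposition~\ref{prop:gradient_algorithm_convergence}, but to use the ideal norm condition~\eqref{eq:norms_constraint} to collapse the exact gradient step onto the solution $u$, and then to treat the residual approximation $\Lambda^\delta$ and the quasi-projection $\Pi^\eta_{\Sc_X}$ as two controlled perturbations. Writing $b=Au$, I would introduce at each iterate the exact residual $r^k=R_Y^{-1}(Au^k-b)=R_Y^{-1}A(u^k-u)$, which belongs to $\Dc_Y$ since $u^k\in\Sc_X$. Using the relation $R_X=A^*R_Y^{-1}A$ from~\eqref{eq:relation_RX_RY}, the exact gradient step lands exactly on $u$:
$$u^k - R_X^{-1}A^*r^k = u^k - R_X^{-1}A^*R_Y^{-1}A(u^k-u) = u^k-(u^k-u)=u.$$
Hence the argument of the quasi-projection in~\eqref{eq:gradient_algorithm_perturbed} can be rewritten as $z^k:=u^k-R_X^{-1}A^*y^k = u - R_X^{-1}A^*(y^k-r^k)$, so that $u^{k+1}\in\Pi^\eta_{\Sc_X}(z^k)$ and the whole effect of $\Lambda^\delta$ is concentrated in the displacement $z^k-u=-R_X^{-1}A^*(y^k-r^k)$.

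The key quantitative step is to bound $\Vert z^k-u\Vert_X$. I would chain the isometries furnished by the ideal relations: since $\Vert\varphi\Vert_{X'}=\Vert R_X^{-1}\varphi\Vert_X$ and~\eqref{eq:norms} gives $\Vert A^*w\Vert_{X'}=\Vert w\Vert_Y$, one gets $\Vert R_X^{-1}A^*(y^k-r^k)\Vert_X=\Vert A^*(y^k-r^k)\Vert_{X'}=\Vert y^k-r^k\Vert_Y$. Applying~\eqref{eq:delta_prox_Lambda} at $r^k\in\Dc_Y$ and then~\eqref{eq:norms_constraint} to rewrite $\Vert r^k\Vert_Y=\Vert A(u^k-u)\Vert_{Y'}=\Vert u^k-u\Vert_X$, this yields the clean estimate $\Vert z^k-u\Vert_X\le\delta\Vert r^k\Vert_Y=\delta\Vert u^k-u\Vert_X$.

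Next I would assemble the one-step recursion. Fixing any $w^*\in\Pi_{\Sc_X}(u)$ and using the quasi-optimality~\eqref{eq:PiSX_eta} applied at the shifted point $z^k$, together with the triangle inequality $\Vert z^k - w^*\Vert_X\le\Vert z^k-u\Vert_X+\Vert u-w^*\Vert_X$, I obtain
$$\Vert u^{k+1}-u\Vert_X \le \Vert u^{k+1}-z^k\Vert_X+\Vert z^k-u\Vert_X \le \eta\Vert z^k-w^*\Vert_X+\Vert z^k-u\Vert_X \le (1+\eta)\Vert z^k-u\Vert_X+\eta\Vert u-w^*\Vert_X.$$
Inserting the bound from the previous step and recalling $\Vert u-w^*\Vert_X=\Vert u-\Pi_{\Sc_X}(u)\Vert_X$ gives the affine contraction $a_{k+1}\le(1+\eta)\delta\,a_k+\eta\Vert u-\Pi_{\Sc_X}(u)\Vert_X$ with $a_k:=\Vert u^k-u\Vert_X$. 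Since $(1+\eta)\delta<1$ by hypothesis, unrolling this recursion as a geometric series and bounding $\sum_{j=0}^{k-1}((1+\eta)\delta)^j\le 1/(1-(1+\eta)\delta)$ produces exactly~\eqref{eq:gradient_algorithm_perturbed_convergence_iterates}.

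I expect the only delicate point to be the bookkeeping in the quantitative step: the quasi-projection error in~\eqref{eq:PiSX_eta} is measured relative to the best approximation of the perturbed point $z^k$, not of $u$, and it is precisely the triangle inequality transferring this error back to $u$ that generates the amplification factor $(1+\eta)$ in the contraction rate (in place of the factor $2$ appearing in the unperturbed estimate). Once the isometry chain $\Vert R_X^{-1}A^*\cdot\Vert_X=\Vert\cdot\Vert_Y$ is established, the remainder is the routine geometric summation.
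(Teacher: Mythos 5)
Your proposal is correct and follows essentially the same route as the paper's proof: your point $z^k$ is exactly the paper's $u+B^\delta(u^k-u)$, your isometry chain $\Vert R_X^{-1}A^*(y^k-r^k)\Vert_X=\Vert y^k-r^k\Vert_Y\le\delta\Vert u^k-u\Vert_X$ is the paper's bound $\Vert B^\delta v^k\Vert_X\le\delta\Vert v^k\Vert_X$ (derived there via $\Vert A\cdot\Vert_{Y'}=\Vert\cdot\Vert_X$ and $AR_X^{-1}A^*=R_Y$ rather than via $\Vert A^*\cdot\Vert_{X'}=\Vert\cdot\Vert_Y$, an equivalent chain), and your triangle-inequality assembly and geometric unrolling of $a_{k+1}\le(1+\eta)\delta\,a_k+\eta\Vert u-\Pi_{\Sc_X}(u)\Vert_X$ coincide with the paper's recursion. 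The only cosmetic difference is that you make explicit the observation that the unperturbed gradient step lands exactly on $u$, which the paper leaves implicit in the definition of $B^\delta$.
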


\begin{proof}
Equation
 \eqref{eq:gradient_algorithm_perturbed} can also be written
\begin{align*}
  u^{k+1} \in \Pi_{\Sc_X}^\eta(u + B^\delta(u^k-u))
 \end{align*}
 with $B^\delta(v)=v-R_X^{-1} A^*\Lambda^\delta( R_Y^{-1}A(v))$.
Denoting $v^k=u^k-u$, and following the proof of {Proposition}
\ref{prop:gradient_algorithm_convergence}, we obtain
\begin{align*}
\Vert u^{k+1} - u\Vert_X &\le\Vert \Pi^\eta_{\Sc_X}(u + B^\delta
v^k)- (u + B^\delta v^k)\Vert_X +
\Vert  B^\delta v^k\Vert_X \\
&\le \eta\Vert \Pi_{\Sc_X}(u)-(u + B^\delta v^k)\Vert_X+ \Vert
B^\delta v^k\Vert_X\\
&\le \eta\Vert \Pi_{\Sc_X}(u)-u \Vert_X+ (1+\eta) \Vert B^\delta
v^k\Vert_X
\end{align*}
Moreover, using \eqref{eq:norms_constraint} and
\eqref{eq:relation_RX_RY}, we have
\begin{align*}
\Vert B^\delta v^k\Vert_X &= \Vert v^k- R_X^{-1}
A^*\Lambda^\delta (R_Y^{-1}Av^k) \Vert_X\\
&=\Vert A v^k- A R_X^{-1} A^*\Lambda^\delta (R_Y^{-1}Av^k
)\Vert_{Y'}\\
&=\Vert R_Y^{-1} A v^k- \Lambda^\delta (R_Y^{-1}Av^k )\Vert_{Y}.
\end{align*}
Noting that $R_Y^{-1} A v^k=R_Y^{-1} (A u^k - b)$ belongs to the subset
$\Dc_Y$, we deduce from assumption \eqref{eq:delta_prox_Lambda} and
equation \eqref{eq:norms_constraint} that
\begin{align*}
\Vert B^\delta v^k\Vert_X &\le \delta \Vert R_Y^{-1} A v^k\Vert_{Y}
= \delta \Vert v^k\Vert_X.
\end{align*}
Denoting $\delta_\eta=\delta(1+\eta)<1$, we finally have
\begin{align*}
\Vert u^{k+1} - u\Vert_X &\le  \eta\Vert \Pi_{\Sc_X}(u)-u
\Vert_X+ \delta_\eta \Vert u^k-u\Vert_X\\
&\le \delta_\eta^{k+1} \Vert u^{0} - u\Vert_X +
\eta\frac{1-\delta_\eta^{k+1}}{1-\delta_\eta}\Vert u -
\Pi_{\Sc_X}(u)\Vert_X,
\end{align*}
from which we deduce
\eqref{eq:gradient_algorithm_perturbed_convergence_iterates}.
\end{proof}

\paragraph{\it Comments}
We note the sequence converges towards a neighborhood of
$\Pi_{\Sc_X}(u)$ whose size is
$\frac{\eta-1+(1+\eta)\delta}{1-(1+\eta)\delta}\Vert u -
\Pi_{\Sc_X}(u)\Vert_X$. Indeed,
{\eqref{eq:gradient_algorithm_perturbed_convergence_iterates}} implies
that
\begin{align}
\Vert u - \Pi_{\Sc_X}(u)\Vert_X\le \Vert u - u^k\Vert_X \le
(1+\gamma_k)\Vert u -
\Pi_{\Sc_X}(u)\Vert_X,\label{eq:sequence_uk_tends_to_neighborhood}
\end{align}
with $\limsup_{k\to\infty}\gamma_k\le
\frac{\eta-1+(1+\eta)\delta}{1-(1+\eta)\delta}$. Therefore, the
sequence tends to provide a good approximation of the best
approximation of $u$ in $\Sc_X$, and the parameters $\delta$ and
$\eta$
 control the quality of this approximation. Moreover, equation
\eqref{eq:gradient_algorithm_perturbed_convergence_iterates}
indicates that the sequence converges  quite rapidly to this
neighborhood. Indeed, in the first iterations, when the error $\Vert
u-u^k\Vert_X$ is dominated by the first term $((1+\eta)\delta)^k
\Vert u-u^0\Vert_X$, the algorithm has at least a linear convergence with convergence rate 
$(1+\eta)\delta$ (note that for $\eta\approx 1$,
the convergence rate is very high for small $\delta$). Once both
error terms are balanced, the error stagnates at the value
$\frac{\eta}{1-(1+\eta)\delta}\Vert u - \Pi_{\Sc_X}(u)\Vert_X$. Note
that when $\delta\to 0$, we recover an ideal algorithm  with a
convergence in only one iteration to an element of the set 
$\Pi_{\Sc_X}^\eta(u)$ of quasi-best approximations of $u$ in $\Sc_{X}$. 
\begin{rmrk}
Even if $\Sc_X$ is chosen as a subset of low-rank tensors, the
subset $\Dc_Y$ defined in \eqref{eq:delta_prox_Lambda} possibly contains tensors with high ranks (or even
tensors with full rank) that are not easy to approximate with a
small precision $\delta$ using low-rank tensor representations.
However, the algorithm only requires
 to well approximate the sequence of residuals
$\{R_Y^{-1}(Au^k-b)\}_{k\ge 0}\subset \Dc_Y$, which may be
achievable in practical applications.
\end{rmrk}
\subsection{Error indicator}\label{sec:error_indicator}
Along the iterations of algorithm
{\eqref{eq:gradient_algorithm_perturbed}}, an estimation of the true
error $\Vert u-u^k\Vert_X$ can be simply obtained by evaluating the
norm $\Vert y^k\Vert_Y$ of the iterate $y^k =
\Lambda^\delta(r^k)$ with $r^k = R_Y^{-1}(Au^k-b)$. Indeed, from property
\eqref{eq:delta_prox_Lambda}, we have
\begin{align}
(1-\delta)\Vert y\Vert_Y  \le \Vert \Lambda^\delta(y)\Vert_Y \le
(1+\delta)\Vert y\Vert_Y,
\end{align}
for all $y\in\Dc_Y$. Therefore, noting that $r^k \in \Dc_Y$ and
$\Vert r^k \Vert_{Y} =\Vert A(u-u^k)\Vert_{Y'} = \Vert
u-u^k\Vert_X$, we obtain
\begin{align}
(1-\delta)\Vert u-u^k\Vert_X \le \Vert y^k \Vert_Y \le
(1+\delta)\Vert u-u^k\Vert_X.
\end{align}
In other words,
\begin{equation}
 \epsilon^k = \frac{1}{1-\delta}\Vert y^k \Vert_Y
 \label{eq:error_estimator}
\end{equation}
provides an error indicator of the true error $\Vert u-u^k\Vert_X$
with an effectivity index  %
$\tau^k =\frac{\epsilon^k}{\Vert u-u^k\Vert_X}\in (1,\frac{1+\delta}{1-\delta})$, 
  which is very good for
small $\delta$.

Moreover, if $\Lambda^\delta$ is an orthogonal projection onto some subspace
$Y^\delta\subset Y$, we easily obtain the following improved lower
and upper bounds:
\begin{align}
\sqrt{1-\delta^2}\Vert u-u^k\Vert_X \le \Vert y^k \Vert_Y \le \Vert
u-u^k\Vert_X, \label{eq:error_indicator_bounds}
\end{align}
that means that the following improved error estimator can be
chosen:
\begin{equation}
\hat \epsilon^k = \frac{1}{\sqrt{1-\delta^2}}\Vert y^k \Vert_Y,\label{eq:error_estimator_projection}
\end{equation}
with effectivity index %
%
$\hat \tau^k =\frac{\hat \epsilon^k}{\Vert u-u^k\Vert_X}\in
(1,\frac{1}{\sqrt{1-\delta^2}})$.

\section{Computational aspects}\label{sec:practical_aspects}

\subsection{Best approximation in tensor
subsets}\label{sec:best_approx_comput}

We here discuss the available algorithms for computing an element in
$\Pi_{\Sc_X}(v)$, that means for solving
\begin{align}
\min_{w\in\Sc_X} \Vert v-w\Vert_X,\label{eq:best_approx_SX_comput}
\end{align}
where $v$ is a given tensor in the tensor space
$X={}_{\Vert\cdot\Vert_X} \bigotimes_{\mu=1}^d X_\mu$ equipped with
norm $\Vert\cdot\Vert_X$, and where $\Sc_X$ is a given tensor
subset. Note that except for the case where $d=2$ and
$\Vert\cdot\Vert_X$ is the induced (canonical) norm, the computation
of a global optimum is still an open problem.

\paragraph{\emph{Canonical norm, $d=2$.}}

For the case $d=2$, we first note that all classical low-rank tensor
formats coincide with the canonical format, that means $\Sc_X =
\Rc_{m}(X)$ for some rank $m$. When the norm $\Vert \cdot\Vert_X$ is
the canonical norm, then $u_m \in \Pi_{\Sc_X}(u)$ coincides with a
rank-$m$ singular value decomposition (SVD) of $u$ (which is possibly not
unique in the case of multiple singular values). Moreover,
$\sigma(u;\Sc_X)^2=\Vert \Pi_{\Sc_X}(u)\Vert_X^2$ is the sum of the
squares of the $m$ dominant singular values of $u$ (see e.g.
\cite{Falco2011}). Efficient algorithms for computing the SVD can
therefore be applied to compute an element in $\Pi_{\Sc_X}(v)$ (a
best approximation). That means that the algorithm
\eqref{eq:gradient_algorithm_perturbed} can be applied with
$\eta=1$.

\paragraph{\emph{Canonical norm, $d>2$.}}
For $d>2$ and when the norm $\Vert \cdot\Vert_X$ is the canonical
norm, different algorithms based on optimization methods have been
proposed for the different tensor formats  {(see e.g. \cite{Espig2012,Holtz2012ALS} or
\cite{Hackbusch2012} for a recent review)}. Very efficient algorithms
based on higher order SVD have also been proposed in \cite{DeLathauwer2000},
\cite{Grasedyck2010} and \cite{Oseledets2009}, respectively for Tucker,
Hierarchical Tucker and Tensor Train tensors. Note that these
algorithms provide quasi-best approximations (but not necessarily best approximations) satisfying \eqref{eq:PiSX_eta} with a $\eta$ bounded by a function of
the dimension $d$: {$\eta\le \sqrt{d}$, $\eta\le\sqrt{2d-3}$ respectively for Tucker and Hierarchical Tucker formats (see \cite{Hackbusch2012}).  For a high dimension $d$, such bounds for $\eta$ would suggest taking very small values for parameter $\delta$ in order to satisfy the assumption of Proposition \ref{prop:gradient_algorithm_perturbed_convergence}. However, in practice, these a priori bounds are rather pessimistic. Moreover, quasi-best approximations obtained by higher order SVD 
can be used as initializations of optimization
algorithms yielding better approximations, i.e. with small values of $\eta$.} 

\paragraph{\emph{General norms, $d\ge 2$.}}
For a general norm $\Vert\cdot\Vert_X$, the computation of a global
optimum to the best approximation problem is still an open problem
for all tensor subsets, and methods based on SVD cannot be applied
anymore. However, classical optimization methods can still be
applied (such as Alternating Minimization Algorithm (AMA)) in order to
provide an approximation of the best approximation
\cite{Rohwedder2013,Uschmajew2012,Espig2012}. We do not detail further these
computational aspects and we suppose that algorithms are available
for providing an approximation of the best approximation in $\Sc_X$
such that \eqref{eq:PiSX_eta} holds with a controlled precision $\eta$, arbitrarily close to $1$.  

\subsection{{Construction of an approximation of $\Lambda^\delta(r)$}}\label{sec:comput_lambda_delta}

At each iteration of the algorithm
\eqref{eq:gradient_algorithm_perturbed}, we have to compute $y^k =
\Lambda^\delta(r^k)$, with $r^k = R_Y^{-1}(Au^k-b)\in Y$, such that
it satisfies
\begin{align}
\Vert y^k - r^k\Vert_Y\le \delta \Vert r^k \Vert_Y.
\end{align}
First note that $r^k $ is
the unique solution of
\begin{align}
\min_{r\in Y} \Vert r - R_Y^{-1}(Au^k-b)\Vert_Y^2.
\label{eq:auxiliary_problem_min}
\end{align}
Therefore, computing $y^k$ is equivalent to solving the best
approximation problem \eqref{eq:auxiliary_problem_min} with a
relative precision $\delta$.
One can equivalently characterize $r^k\in Y$ by the variational equation
$$
\langle r^k,\delta r\rangle_Y = \langle Au^k-b,\delta
r\rangle_{Y',Y}\quad \forall \delta r\in Y,
$$
or in an operator form:
\begin{align}
R_Y r^k = Au^k-b,\label{eq:auxiliary_problem_eq}
\end{align}
where the Riesz map $R_Y=AR_X^{-1}A^*$ is a positive symmetric
definite operator.

{
\begin{rmrk} For $A$ symmetric and positive definite, it is possible to choose  $R_X=R_Y=A$ (see example \ref{ex:symmetric}) that corresponds to the energy norm on $X$. For this choice, the auxiliary problem \eqref{eq:auxiliary_problem_min} has the same structure {as the initial problem}, with an operator $A$ and a right-hand side $Au^k-b$.
\end{rmrk}
}
\subsubsection{Low-rank tensor methods}

For solving \eqref{eq:auxiliary_problem_min}, we can also use low-rank
tensor approximation methods. Note that in general,
$\Vert\cdot\Vert_Y$ is not an induced (canonical) norm in $Y$, so
that classical tensor algorithms (e.g. based on SVD) cannot be
applied for solving \eqref{eq:auxiliary_problem_min} (even
approximatively). Different strategies have been proposed in the
literature for constructing tensor approximations of the solution of
optimization problems. {We can either use iterative solvers using classical tensor approximations applied to equation
\eqref{eq:auxiliary_problem_eq} \cite{Kressner2011,Khoromskij2011,Matthies2012,Ballani2013}, or directly 
compute an approximation  $y^k$ of $r^k$ in low-rank tensor subsets
using optimization algorithms applied to problem
\eqref{eq:auxiliary_problem_min}. Here, we adopt the latter strategy and rely on a greedy algorithm which consists in computing successive corrections of the approximation in a fixed low-rank subset.}

\subsubsection{A possible {(heuristic)} algorithm} \label{lambdadeltaalgo}

We use the following algorithm for the construction of a sequence of
approximations $\{y_m^k\}_{m\ge 0}$.\\

Let $y_0^k=0$. 
Then,
for each $m\ge 1$, we proceed as follows:
\begin{enumerate}
\item compute an optimal correction $w^k_m$ of $y_{m-1}^{k}$ in $\Sc_Y$:
$$
 w^k_m \in \arg\min_{w\in \Sc_Y} \Vert
 y_{m-1}^{k} + w - r^k \Vert_Y,
$$
\item {define a linear subspace $Z_m^k$ such that $y_{m-1}^k +w_m^k\in Z_m^k$},
\item compute $y_m^k$ as the best approximation of $r^k$ in $Z_m^k$,
$$
y_m^k = \arg\min_{y\in Z_m^k} \Vert y-r^k\Vert_Y,
$$
\item return to step (2) or (1).
\end{enumerate}

{\begin{rmrk}
The convergence proof for this algorithm can be found in \cite{Falco2012-pgd}.
The convergence ensures that the precision $\delta$ can be achieved
after a certain number of iterations.\footnote{Note however that a
slow convergence of these algorithms may yield to high rank
representations of iterates $y^k_m$, even for a low-rank subset
$\Sc_Y$.} However, in practice, best approximation problems at step (1) can not be solved exactly except for particular situations (see section \ref{sec:best_approx_comput}), so that the results of  \cite{Falco2012-pgd} do not guaranty anymore the convergence of the algorithm. If quasi-optimal solutions can be obtained, this algorithm is a modified version of weak greedy algorithms  (see \cite{Temlyakov2008}) for which convergence proofs can also be obtained. { Available algorithms for obtaining quasi-optimal solutions of best low-rank approximation problem appearing at step (1) are still heuristic but seem to be effective.}
\end{rmrk}


In this paper, we will only rely on the use of low-rank canonical formats for numerical illustrations. At step (1), we introduce rank-one corrections $w^k_m \in \Sc_Y = \Rc_1(Y)$, where $Y={}_{\Vert\cdot\Vert_Y} \bigotimes_{\mu=1}^d Y^\mu$. The
auxiliary variable $y_m^k \in \Rc_m(Y)$ can be {written in the form} $y_m^k = \sum_{i=1}^{m} \otimes_{\mu=1}^d w_i^{k,\mu} $. At step (2), we select a particular dimension $\mu \in \{1,\hdots,d\}$ and define 
$$
Z_m^k = \left\{ \sum_{i=1}^m w_i^{k,1}\otimes \cdots \otimes v_i^{\mu} \otimes \cdots\otimes w_i^{k,d} ~,~ v_i^{\mu} \in Y^{\mu} \right\},
$$
where $\text{dim}(Z_m^k) =m~\text{dim}(Y^{\mu})$. Step (3) therefore consists in updating functions $w_i^{k,\mu}$, $i=1\hdots d$, in the representation of $y_m^k$. Before returning to step (1), the updating steps (2)-(3) can be performed several times for a set of dimension $\mu \in I \subset \{1,\dots,d\}$.

\begin{rmrk}
Note that the solution of minimization problems at steps (1) and (3) do not require to know $r^k$ explicitly. Indeed, the {stationary conditions associated with} these optimization problems 
only require the evaluation of $\langle r^k,\delta y\rangle_{Y} = \langle Au^k-b,\delta y\rangle_{Y',Y}$, for $\delta Y\in Y$. {For step (1), the stationary equation reads} $\langle R_Y w_m^k,\delta y\rangle_{Y',Y}  = \langle R_Y y_{m-1}^k+  Au^k-b,\delta y\rangle_{Y',Y}$ for all $\delta y$ in the tangent space to $\mathcal{S}_Y$, while the variational form of step (3) {reads} $\langle R_Y y_m^k,\delta y\rangle_{Y',Y}  = \langle Au^k-b,\delta y\rangle_{Y',Y}$ for all $\delta y$ in $Z_m^k$.
\end{rmrk}

%


Finally, as a stopping criterion, we use a heuristic error estimator based on stagnation.
The algorithm is stopped at iteration $m$ if 
\begin{equation}
e_m^p = \frac{\Vert y_{m}^{k}-y_{m+p}^{k}\Vert _Y}{\Vert y_{m+p}^{k}\Vert _Y} \leq \delta, \label{eq:stoppingcriterion}
\end{equation}
for some chosen $p\ge 1$ (typically $p=1$). Note that for $p$ sufficiently
large, $y_{m+p}^k$ can be considered as a good estimation of the
residual $r^k$ and the criterion {reads}
$\Vert r^k - y_{m}^k\Vert_Y \le \delta \Vert r^k\Vert_Y,$ which is the desired property. 
This stopping criterion is quite rudimentary and should be improved for a 
{real} control of the algorithm. {Although numerical experiments illustrate that this heuristic error estimator provides a rather good approximation of the true error, an upper bound of the true error should be used in order to {guarantee} that the precision $\delta$ is really achieved. However, a tight error bound should be used in order to avoid a pessimistic overestimation of the true error which may yield an (unnecessary) increase of the computational costs for the auxiliary problem. This key issue will be addressed in a future work.}
{
\begin{rmrk}
Other updating strategies could be introduced at steps (2)-(3). For example, we could choose $Z_m^k = \textrm{span}\{w_1^k,\hdots,w_m^k\}$, thus making the algorithm an orthogonal greedy algorithm with a dictionary $\Sc_Y$   \cite{Temlyakov2011}.  Nevertheless, numerical simulations demonstrate that when using rank-one corrections (i.e. $\Sc_Y=\Rc_1(Y)$), this updating strategy do not significantly improve the convergence of pure greedy constructions. When it is used for obtaining an approximation $y_m^k$ of $r^k$ with a small relative error $\delta$, it usually requires a very high rank $m$.
A more efficient updating strategy consists in defining 
$Z_m^k $ as the tensor space $\bigotimes_{\mu=1}^d Z_m^{k,\mu}$ with $Z_m^{k,\mu}$ $\textrm{span} \{w_1^{k,\mu},\hdots,w_m^{k,\mu}\}$. Since $\mathrm{dim}(Z_m^k)=m^d$, the projection of $r^k$ in  $Z_m^k$ can not be computed exactly for high dimensions $d$. However, approximations of this projection can be obtained using again low-rank formats (see \cite{Giraldi2012}). 
\end{rmrk}
}

\subsubsection{Remark on the tensor structure of Riesz maps}
We consider that operator $A$ and right-hand side  $b$ admit
low-rank representations
$$A =
\sum_{i=1}^{r_A} \otimes_{\mu=1}^d A_i^\mu \quad \text{and} \quad b
= \sum_{i=1}^{r_b} \otimes_{\mu=1}^d b_i^\mu.$$ We suppose that a
norm $\Vert\cdot\Vert_X$ has been selected and corresponds to a
Riesz map $R_X$ with a low-rank representation:
$$
R_{X} = \sum_{i=1}^{r_X}  \otimes_{\mu=1}^d R_i^\mu.
$$
The ideal choice of norm $\Vert\cdot\Vert_Y$ then corresponds to the
following expression of the Riesz map $R_Y$:
$$
R_Y = AR_X^{-1} A^* = (\sum_{i=1}^{r_A} \otimes_{\mu=1}^d A_i^\mu)
(\sum_{i=1}^{r_X} \otimes_{\mu=1}^d R_i^\mu)^{-1}(\sum_{i=1}^{r_A}
\otimes_{\mu=1}^d {A_i^\mu}^*).
$$
Note that the expression of $R_Y$ cannot be computed explicitly
($R_Y$ is generally a full rank tensor). Therefore, in the general
case, algorithms for solving problem
\eqref{eq:auxiliary_problem_eq} have to be able to handle an
implicit formula for $R_Y$. However,  in the  particular case where
the norm $\Vert \cdot\Vert_X$ is a canonical norm induced by norms $\Vert\cdot\Vert_{\mu}$ on $X_{\mu}$, the mapping $R_X$ is a rank one tensor $R_X=\otimes_{\mu=1}^d
R_{X_\mu}$, where $R_{X_\mu}$ is the Riesz map associated with the
norm $\Vert\cdot\Vert_\mu$ on $X_\mu$. $R_Y$ then admits the
following explicit expression:
$$
R_Y = A R_X^{-1} A^* = \sum_{i=1}^{r_A} \sum_{j=1}^{r_A}
\otimes_{\mu=1}^d (A_i^\mu R_{X_\mu}^{-1} {A_j^\mu}^*).
$$
In the numerical examples, we only consider this simple particular case. 
Efficient numerical methods for the general case will be proposed in a 
subsequent paper.

\subsection{Summary of the algorithm}

Algorithm \ref{alg:procedure1} provides  a step-by-step 
outline of the overall iterative method for the approximation  
of the solution of \eqref{eq:min_res_equivalent}
 in a fixed subset 
$\Sc_X$ and with a chosen metric $\Vert\cdot \Vert_{X}$. 
Given a precision $\delta$, an approximation of the residual is obtained with a greedy algorithm using a fixed subset $\Sc_Y$ for computing successive corrections. We denote by $e(y^k_m,r^k)$ 
an estimation of the relative error $\Vert y^k_m -r^k\Vert_Y/\Vert r^k\Vert_Y$, 
where $r^k=R_Y^{-1}(Au^k-b)$.

\begin{algorithm}[ht]
\begin{algorithmic}[1]
\STATE Set $u^0=0$;
\FOR{$k=0$ to $K$} 
\STATE Set $m=0$ ;
\WHILE{$e(y^k_m,r^k) \le \delta $}
\STATE $m=m+1$\;;
\STATE Compute a correction 
$ \displaystyle w_m^k \in \arg \min_{w\in \Sc_Y}\Vert y_{m-1}^k+w-r^k\Vert_Y$ ;
\STATE {Set $y_m^k = y_{m-1}^k+w_m^k$ ;}
\STATE {Define $Z_{m}^k$ containing $y_{m}^k$ ;}
\STATE Compute the projection $  \displaystyle y_m^k = \arg \min_{y \in Z_m^k}\Vert y-r^k\Vert_Y$ ;
\STATE {Return to step 7 or continue ; }
\ENDWHILE
\STATE Compute $u^{k+1} \in \Pi_{\Sc_X}^\eta(u^k-R_X^{-1}A^*y^k_m)$ ;
\ENDFOR
\end{algorithmic}
\caption{Gradient-type algorithm}
\label{alg:procedure1}
\end{algorithm}

%
%

\section{Greedy algorithm}\label{sec:greedy}

In this section, we introduce and analyze a greedy algorithm for the
progressive construction of a sequence $\{u_m\}_{m\ge 0}$, where
$u_m$ is obtained by computing a correction of $u_{m-1}$ in a given
low-rank tensor subset $\Sc_X$ (typically a small subset such as the set of rank-one tensors  $\Rc_1(X)$). 
{Here, we consider that approximations of optimal corrections are
available with a certain precision. It results in an algorithm which can 
be considered as a modified version of weak greedy algorithms \cite{Temlyakov2008}.}
{This weak greedy algorithm can be applied to solve the best approximation problem
\eqref{eq:minimization_SX_residual} where approximations of optimal corrections are obtained using Algorithm \ref{alg:procedure1} with an updated right-hand side at each greedy step.  
The interest of such a global greedy strategy is twofold. First, an adaptive approximation strategy which would consist in solving approximation problems in an increasing sequence of low-rank subsets $\Sc_X$ is often unpractical since for high dimensional problems and subspace based tensor formats, computational complexity drastically increases with the rank. Second, 
it simplifies
the solution of auxiliary problems (i.e. the computation of the sequence of $y^k$) when solving best low-rank approximation problems using  Algorithm \ref{alg:procedure1}. Indeed, if the sequence $u^k$ in Algorithm \ref{alg:procedure1} belongs to a low rank tensor subset (typically a rank-one tensor subset), the residual $r^k$ in Algorithm \ref{alg:procedure1} admits a moderate rank or  can be obtained by a low-rank correction of the residual of the previous greedy iteration.}
 
Here, we assume that the subset $\Sc_X$ verifies properties
\eqref{SX_weakly_closed} and \eqref{SX_cone}, and that
$\mathrm{span}(\Sc_X)$ is dense in $X$ (which is verified by all
classical tensor subsets presented in section
\ref{sec:tensor_subsets}). 

\subsection{A weak greedy algorithm}
We consider the following greedy algorithm. Given $u_0=0$, we construct a sequence 
$\{u_m\}_{m\ge
1}$ defined for $m\ge 1$ by
\begin{align}
u_m = u_{m-1} + \widetilde w_m,\label{eq:pure_greedy}
\end{align}
where $\widetilde w_m\in\Sc_X$ is a correction of $u_{m-1}$ satisfying
\begin{align}
\Vert u-u_{m-1}-\widetilde w_m \Vert_X\le(1+\gamma_m)
\min_{w\in\Sc_X}\Vert u-u_{m-1}-w\Vert_X, \label{eq:condition_greedy_perturbation}
\end{align}
with  $\gamma_m$ a sequence of small parameters. 

\begin{rmrk}\label{rmrk:gamma_delta}
A $\widetilde w_m$ satisfying \eqref{eq:condition_greedy_perturbation} can
be obtained using the gradient type algorithm of section \ref{sec:perturbation} that provides
a sequence that satisfies \eqref{eq:sequence_uk_tends_to_neighborhood}. Given the parameter 
$\delta=\delta_m$ in \eqref{eq:gradient_algorithm_perturbed}, property
\eqref{eq:condition_greedy_perturbation} can be achieved with any
$\gamma_m > \frac{2\delta_m}{1-2\delta_m}$.
\end{rmrk}
%
%

\subsection{Convergence analysis}\label{sec:convergence_greedy}

Here, we provide a convergence result for the above greedy algorithm whose proof 
follows the lines of \cite{Temlyakov2008} for the convergence proof of weak greedy 
algorithms\footnote{Note that the condition
\eqref{eq:condition_greedy_perturbation} on the successive  corrections does
not  allow to directly apply the results on classical weak greedy  algorithms.}.

In the following, we denote by $f_{m}=u-u_m$. For the sake of simplicity,  we
denote by $\Vert\cdot\Vert=\Vert\cdot\Vert_X$ and 
$\langle\cdot,\cdot\rangle=\langle\cdot,\cdot\rangle_X$ and we  let $w_m\in
\Pi_{\Sc_X}(f_{m-1})$, for which we have the following useful relations  coming
from properties of best approximation problems in tensor subsets (see section
\ref{sec:tensor_subsets}): 
\begin{align} &\Vert f_{m-1} - w_m\Vert^2 = \Vert
f_{m-1}\Vert^2 - \Vert w_m \Vert^2 \quad \text{and} \quad  \Vert w_m\Vert^2 =
\langle f_{m-1},w_m  \rangle.\label{eq:rel6} 
\end{align} 
We introduce the
sequence  $\{\alpha_m\}_{m\ge 1}$  defined by 
\begin{equation} 
\alpha_m =
\dfrac{\Vert f_{m-1}-w_m\Vert}{\Vert f_{m-1}\Vert}\in[0,1[. \label{eq:alpha} 
\end{equation}
 {It can be also useful to introduce the computable sequence $\{\widetilde{\alpha}_m\}_{m\ge 1}$ such that
\begin{equation} 
\widetilde{\alpha}_m=
\dfrac{\Vert f_{m-1}-\widetilde{w}_m\Vert}{\Vert f_{m-1}\Vert}. \label{eq:alpha_tilde} 
\end{equation}
that satisfies for all $m\le 0$ 
\begin{equation} 
\alpha_m \le \widetilde{\alpha}_m \le (1+\gamma_m) \alpha_m. \label{eq:rel_alpha} 
\end{equation}
}

\begin{lmm}\label{lem:nfconverges}
Assuming that for all $m\geq1$ we have 
\begin{equation}\label{ass:lambda}
(1+\gamma_m)\alpha_m {<} 1,
\end{equation}
the sequence $\{\Vert f_m \Vert \}_{m\geq1}$ converges.
Furthermore, it is possible to define a positive sequence $\{\kappa_m\}_{m \ge 1}$ 
 as
\begin{equation}\label{eq:defkappa}
\kappa_m^2 = 2\frac{\langle f_{m-1},\widetilde w_m\rangle}{\Vert \widetilde w_m \Vert^2}-1,
\end{equation}
and we have $\{ \kappa_m\Vert \widetilde w_m\Vert \}_{m\geq1} \in \ell^2$.
\end{lmm}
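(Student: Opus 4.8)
The plan is to reduce the entire statement to expanding one squared norm and telescoping, exploiting the fact that the greedy update is simply $f_m = f_{m-1}-\widetilde w_m$. First I would record that, since $u_m = u_{m-1}+\widetilde w_m$, one has $f_m = u-u_m = f_{m-1}-\widetilde w_m$, and hence $\Vert f_m\Vert = \Vert f_{m-1}-\widetilde w_m\Vert = \widetilde\alpha_m\Vert f_{m-1}\Vert$ by the definition \eqref{eq:alpha_tilde} of $\widetilde\alpha_m$. Combining the hypothesis \eqref{ass:lambda} with \eqref{eq:rel_alpha} gives $\widetilde\alpha_m \le (1+\gamma_m)\alpha_m < 1$, so $\Vert f_m\Vert \le \Vert f_{m-1}\Vert$ for every $m$; the sequence $\{\Vert f_m\Vert\}$ is thus non-increasing and bounded below by $0$, hence convergent, which settles the first assertion.

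Next I would turn to $\kappa_m$. Expanding the square and rearranging yields the key identity
\[
\kappa_m^2\,\Vert \widetilde w_m\Vert^2 = 2\langle f_{m-1},\widetilde w_m\rangle - \Vert \widetilde w_m\Vert^2 = \Vert f_{m-1}\Vert^2 - \Vert f_m\Vert^2 .
\]
Provided $f_{m-1}\neq 0$ (otherwise $u=u_{m-1}$ and the solution has already been recovered), the bound $\widetilde\alpha_m<1$ forces $\widetilde w_m\neq 0$, so division by $\Vert\widetilde w_m\Vert^2>0$ is legitimate; moreover the right-hand side is non-negative by the monotonicity established above, so $\kappa_m^2\ge 0$ and $\kappa_m$ is a well-defined positive number.

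Finally, summing this identity from $m=1$ to $N$ telescopes to $\sum_{m=1}^N \kappa_m^2\Vert\widetilde w_m\Vert^2 = \Vert f_0\Vert^2 - \Vert f_N\Vert^2 \le \Vert f_0\Vert^2$, and letting $N\to\infty$ gives $\sum_{m\ge 1}\kappa_m^2\Vert\widetilde w_m\Vert^2 \le \Vert f_0\Vert^2 < \infty$, i.e. $\{\kappa_m\Vert\widetilde w_m\Vert\}\in\ell^2$. All the computations are elementary; the only point requiring genuine care --- and the one I treat as the crux --- is the well-definedness of $\kappa_m$, equivalently the non-negativity of $\Vert f_{m-1}\Vert^2-\Vert f_m\Vert^2$. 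This is exactly where assumption \eqref{ass:lambda} is used: through \eqref{eq:rel_alpha} it yields $\widetilde\alpha_m<1$ and hence the monotonicity of $\{\Vert f_m\Vert\}$, without which the square root defining $\kappa_m$ need not exist. Note that, unlike the classical weak greedy analysis, this monotonicity must be read off the perturbed, computable quantity $\widetilde\alpha_m$, so relation \eqref{eq:rel_alpha} linking it to $\alpha_m$ is essential.
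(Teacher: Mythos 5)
Your proof is correct and takes essentially the same route as the paper's: the strict decrease $\Vert f_m\Vert \le (1+\gamma_m)\alpha_m \Vert f_{m-1}\Vert < \Vert f_{m-1}\Vert$ deduced from \eqref{eq:condition_greedy_perturbation} (you pass through $\widetilde\alpha_m$ and \eqref{eq:rel_alpha}, which amounts to the same thing), the identity $\kappa_m^2\Vert \widetilde w_m\Vert^2 = 2\langle f_{m-1},\widetilde w_m\rangle - \Vert\widetilde w_m\Vert^2 = \Vert f_{m-1}\Vert^2 - \Vert f_m\Vert^2$ obtained by expanding the square, and the telescoping bound by $\Vert f_0\Vert^2$. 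One small wording point: since your decrease is in fact strict ($\widetilde\alpha_m<1$ and $f_{m-1}\neq 0$), you get $\kappa_m^2>0$ rather than merely $\kappa_m^2\ge 0$, which is exactly what the claimed positivity of $\kappa_m$ requires.
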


\begin{proof} From \eqref{eq:pure_greedy} and
\eqref{eq:condition_greedy_perturbation}, we have  
$$     \Vert f_m\Vert =
\Vert f_{m-1} - \widetilde w_m\Vert \leq (1+\gamma_m) \Vert f_{m-1} - w_m\Vert
= (1+\gamma_m)\alpha_m \Vert f_{m-1}\Vert. 
$$ 
Under assumption
\eqref{ass:lambda}, $\{\Vert f_m\Vert\}_{m\geq1}$ is a {strictly} decreasing and positive 
sequence and therefore converges. {Moreover, this implies that 
$\widetilde {w}_m \neq 0$ and  since}    
\begin{equation*}   \Vert
f_{m-1} - \widetilde w_m\Vert^2      =\Vert f_{m-1}\Vert^2  -      
\left(2\langle f_{m-1},\widetilde w_m\rangle - \Vert      \widetilde w_m
\Vert^2\right) \leq \Vert f_{m-1}\Vert^2,   \end{equation*}   it follows that
$2\langle f_{m-1},\widetilde w_m\rangle {>}  \Vert \widetilde w_m \Vert^2$.
Therefore, $\kappa_m$ is {positive and} can be defined   by \eqref{eq:defkappa} and we
have   \begin{align*}\label{eq:propkappa}    \Vert f_{m-1} - \widetilde
w_m\Vert^2      &= \Vert f_{m-1}\Vert^2  - \kappa_m^2 \Vert \widetilde
w_m\Vert^2       = \Vert f_{0}\Vert^2  - \sum_{i=1}^{m}\kappa_i^2 \Vert
\widetilde w_i\Vert^2       ,   \end{align*} that completes the proof. \end{proof}

We now provide a result giving a relation between $\Vert w_m\Vert $ and $\Vert \widetilde w_m\Vert $.
\begin{lmm}\label{lem:kappa_bound} Assume \eqref{ass:lambda} holds and let 
$\mu_m^2 =  \dfrac{1-(1+\gamma_m)^2\alpha_m^2}{1-\alpha_m^2}\in [0,1]$. Then, 
we have \begin{equation}\label{eq:bound_w_wt} \mu_m \Vert w_m\Vert  \leq \kappa_m
\Vert \widetilde w_m\Vert  \leq  \Vert w_m\Vert ,  \end{equation}      and
\begin{equation}\label{eq:bound_w_mu2} \frac{\mu_m}{2}\leq\kappa_m.
\end{equation}  
\end{lmm}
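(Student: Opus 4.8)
The plan is to reduce both assertions to the three scalar quantities $\alpha_m$, $\widetilde\alpha_m$, $\kappa_m$, using relations that are already at hand. First I would record two identities. From the computation in the proof of Lemma~\ref{lem:nfconverges} together with \eqref{eq:alpha_tilde}, one has
$$
\kappa_m^2\Vert\widetilde w_m\Vert^2 = \Vert f_{m-1}\Vert^2 - \Vert f_{m-1}-\widetilde w_m\Vert^2 = (1-\widetilde\alpha_m^2)\Vert f_{m-1}\Vert^2,
$$
and from \eqref{eq:rel6} with \eqref{eq:alpha},
$$
\Vert w_m\Vert^2 = \Vert f_{m-1}\Vert^2 - \Vert f_{m-1}-w_m\Vert^2 = (1-\alpha_m^2)\Vert f_{m-1}\Vert^2 .
$$
Together with the definition of $\mu_m$, which gives $\mu_m^2\Vert w_m\Vert^2 = (1-(1+\gamma_m)^2\alpha_m^2)\Vert f_{m-1}\Vert^2$, this provides a dictionary in which $\kappa_m\Vert\widetilde w_m\Vert$, $\Vert w_m\Vert$ and $\mu_m\Vert w_m\Vert$ become $\sqrt{1-\widetilde\alpha_m^2}$, $\sqrt{1-\alpha_m^2}$ and $\sqrt{1-(1+\gamma_m)^2\alpha_m^2}$ times $\Vert f_{m-1}\Vert$.

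With this dictionary the double inequality \eqref{eq:bound_w_wt} is immediate: it is equivalent to $1-(1+\gamma_m)^2\alpha_m^2 \le 1-\widetilde\alpha_m^2 \le 1-\alpha_m^2$, that is, to $\alpha_m \le \widetilde\alpha_m \le (1+\gamma_m)\alpha_m$, which is exactly \eqref{eq:rel_alpha}.

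The second bound \eqref{eq:bound_w_mu2} is the substantive part, and it needs one geometric ingredient beyond these scalars. The key observation I would use is that $\widetilde w_m\in\Sc_X$, so the characterization $\Vert w_m\Vert = \sigma(f_{m-1};\Sc_X) = \max_{z\in\Sc_X}\langle f_{m-1},z\rangle/\Vert z\Vert$ yields $\langle f_{m-1},\widetilde w_m\rangle \le \Vert w_m\Vert\,\Vert\widetilde w_m\Vert$. Inserting this into the rearrangement $2\langle f_{m-1},\widetilde w_m\rangle = (\kappa_m^2+1)\Vert\widetilde w_m\Vert^2$ of \eqref{eq:defkappa} gives $\Vert\widetilde w_m\Vert \le 2\Vert w_m\Vert/(\kappa_m^2+1)$. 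Substituting the norm expressions above and writing $\nu = \sqrt{(1-\widetilde\alpha_m^2)/(1-\alpha_m^2)}$ (so $\nu\in(0,1]$ and $\nu\ge\mu_m$ since $\widetilde\alpha_m\le(1+\gamma_m)\alpha_m$) reduces the inequality to the quadratic condition $\nu(\kappa_m^2+1)\le 2\kappa_m$. Solving $\nu\kappa^2-2\kappa+\nu\le0$ places $\kappa_m$ between its two roots, so $\kappa_m$ is at least the smaller one, which simplifies to $\kappa_m \ge \nu/(1+\sqrt{1-\nu^2}) \ge \nu/2 \ge \mu_m/2$.

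The hard part is precisely this last estimate: the scalars $\alpha_m,\widetilde\alpha_m$ alone do not bound $\Vert\widetilde w_m\Vert$ from above, and a correction $\widetilde w_m$ that meets the quasi-optimality constraint \eqref{eq:condition_greedy_perturbation} while pointing far out along the direction of $f_{m-1}$ would make $\Vert\widetilde w_m\Vert$ large and hence $\kappa_m$ too small for \eqref{eq:bound_w_mu2}. What excludes this is that $w_m$ and $\widetilde w_m$ lie in the \emph{same} cone $\Sc_X$ relative to the \emph{same} $f_{m-1}$: the best-approximation structure, encoded through $\sigma(f_{m-1};\Sc_X)=\Vert w_m\Vert$, caps $\langle f_{m-1},\widetilde w_m\rangle$ by $\Vert w_m\Vert\Vert\widetilde w_m\Vert$ and thereby controls $\Vert\widetilde w_m\Vert$. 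Identifying and deploying this single $\sigma$-inequality is the crux; everything after it is routine manipulation of the resulting quadratic.
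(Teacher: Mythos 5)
Your proof is correct and follows essentially the same route as the paper's: part \eqref{eq:bound_w_wt} is the paper's chain of inequalities rewritten through $\widetilde\alpha_m$ via \eqref{eq:rel_alpha}, and part \eqref{eq:bound_w_mu2} hinges on exactly the same key inequality $\langle f_{m-1},\widetilde w_m\rangle \le \Vert w_m\Vert\,\Vert\widetilde w_m\Vert$ coming from the optimality of $w_m$ (the $\sigma$-characterization). The only cosmetic difference is the final step, where the paper combines $\frac{\kappa_m^2+1}{2}\le\frac{\kappa_m}{\mu_m}$ with $\kappa_m^2+1\ge 1$ to conclude directly, rather than solving your quadratic $\nu\kappa_m^2-2\kappa_m+\nu\le 0$ and relaxing $\nu$ to $\mu_m$.
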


\begin{proof}   From inequality \eqref{eq:condition_greedy_perturbation} and
from the optimality of $w_{m}$, it follows that
\begin{align*}
&\Vert f_{m-1}-w_m\Vert ^2 \leq\Vert f_{m-1}-\widetilde w_m\Vert ^2    \leq (1+\gamma_m)^2 \Vert f_{m-1}-w_m\Vert ^2 \\ 
\Rightarrow \;  &\Vert f_{m-1}\Vert ^2-\Vert w_m\Vert ^2 \leq \Vert f_{m-1}\Vert ^2-\kappa_m^2 \Vert \widetilde w_m\Vert ^2 \leq (1+ \gamma_m)^2 \alpha_m^2 \Vert f_{m-1}\Vert ^2\\
\Rightarrow\;&( 1-(1+\gamma_m)^2 \alpha_m^2 )\Vert f_{m-1}\Vert ^2 \leq \kappa_m^2    \Vert \widetilde w_m\Vert ^2 \leq \Vert w_m\Vert ^2
\end{align*}
Using $\Vert f_{m-1}\Vert ^2=\Vert f_{m-1}-w_m\Vert ^2+\Vert w_m\Vert ^2 = \alpha_m^2    \Vert f_{m-1}\Vert^2
+\Vert w_m\Vert ^2 $, and using the definition of $\mu_m$,   we obtain
\eqref{eq:bound_w_wt}.    In addition, from the optimality of $w_m$, we have  
$ \langle \frac{\widetilde w_m}{\Vert \widetilde w_m\Vert } , f_{m-1} \rangle    \leq
\langle \frac{w_m}{\Vert w_m\Vert } , f_{m-1} \rangle = \Vert w_{m}\Vert,   $   or equivalently   $  
\frac{\kappa_m^2+1}{2} \Vert \widetilde w_m\Vert  \leq \Vert w_m\Vert .   $   Combined with
\eqref{eq:bound_w_wt}, it gives   $   \frac{\kappa_m^2+1}{2} \leq
\frac{\Vert w_m\Vert }{\Vert \widetilde w_m\Vert } \leq    \frac{\kappa_m}{\mu_m}$, which
implies   \eqref{eq:bound_w_mu2}. \end{proof}

\begin{prpstn}\label{lem:convergence_fm}
Assume \eqref{ass:lambda} and that $\left\{\mu_m^2\right\}_{m\ge1}$ is such that
$\sum_{m=1}^\infty  \mu_m^2= \infty. 
$
 Then, if $\left\{f_m \right\}_{m\ge 1}$ converges, it converges to zero.
\end{prpstn}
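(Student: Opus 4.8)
The plan is to show that the limit $f$ of the sequence $\{f_m\}$ must be orthogonal to every element of $\Sc_X$, and then to invoke the density of $\mathrm{span}(\Sc_X)$ in $X$ to conclude $f=0$. The starting point is the summability information already gathered: Lemma \ref{lem:nfconverges} gives $\{\kappa_m\Vert\widetilde w_m\Vert\}\in\ell^2$, and the left inequality of \eqref{eq:bound_w_wt} in Lemma \ref{lem:kappa_bound} gives $\mu_m\Vert w_m\Vert\le\kappa_m\Vert\widetilde w_m\Vert$. Combining these yields
\begin{equation*}
\sum_{m\ge1}\mu_m^2\Vert w_m\Vert^2\le\sum_{m\ge1}\kappa_m^2\Vert\widetilde w_m\Vert^2<\infty.
\end{equation*}
First I would exploit the hypothesis $\sum_{m}\mu_m^2=\infty$ against this convergent series: if $\Vert w_m\Vert$ were bounded below by a positive constant for all large $m$, then $\sum_m\mu_m^2\Vert w_m\Vert^2$ would diverge, a contradiction. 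Hence $\liminf_{m\to\infty}\Vert w_m\Vert=0$, so there is a subsequence $(m_k)$ with $\Vert w_{m_k}\Vert\to0$.

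Next I would translate this into a statement about $f$. Recall from the properties of best approximation in tensor subsets (section \ref{sec:tensor_subsets}) that, since $w_m\in\Pi_{\Sc_X}(f_{m-1})$,
\begin{equation*}
\Vert w_m\Vert=\sigma(f_{m-1};\Sc_X)=\max_{z\in\Sc_X}\frac{\langle f_{m-1},z\rangle}{\Vert z\Vert}.
\end{equation*}
Fix any $z\in\Sc_X$ with $\Vert z\Vert=1$. Then $\langle f_{m_k-1},z\rangle\le\Vert w_{m_k}\Vert\to0$. Since by assumption $f_m\to f$ and $m_k-1\to\infty$, the inner product with the fixed vector $z$ passes to the limit (by Cauchy--Schwarz), giving $\langle f,z\rangle\le0$. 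Applying the same argument to $-z$, which belongs to $\Sc_X$ by the cone property \eqref{SX_cone}, yields $\langle f,z\rangle\ge0$, hence $\langle f,z\rangle=0$. As $z$ was an arbitrary element of $\Sc_X$, $f$ is orthogonal to $\Sc_X$ and therefore to $\mathrm{span}(\Sc_X)$, which is dense in $X$; this forces $f=0$.

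The routine pieces (the two series manipulations and Cauchy--Schwarz) are straightforward; the step requiring the most care is the extraction of the subsequence along which $\Vert w_m\Vert\to0$ and its transfer to the limit $f$. The crux is the interplay between the divergence of $\sum\mu_m^2$ and the convergence of $\sum\mu_m^2\Vert w_m\Vert^2$, together with the identification $\Vert w_m\Vert=\sigma(f_{m-1};\Sc_X)$: it is exactly this quantity, measuring how well $\Sc_X$ captures the current error, that must be driven to zero, and the symmetry of $\Sc_X$ (the cone property) together with the density of $\mathrm{span}(\Sc_X)$ are what upgrade $\langle f,z\rangle\le0$ to $f=0$.
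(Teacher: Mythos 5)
Your proof is correct and rests on exactly the same ingredients as the paper's: the summability of $\mu_m^2\Vert w_m\Vert^2$ (obtained there via the telescoping identity $\Vert f_m\Vert^2=\Vert f_{m-1}\Vert^2-\kappa_m^2\Vert \widetilde w_m\Vert^2$, here via Lemmas \ref{lem:nfconverges} and \ref{lem:kappa_bound}), the identification $\Vert w_m\Vert=\sigma(f_{m-1};\Sc_X)$, and the density of $\mathrm{span}(\Sc_X)$. The only difference is organizational: the paper argues by contradiction (if $f\neq 0$, density gives a uniform lower bound $\Vert w_m\Vert\ge\varepsilon$, forcing $\sum_m\mu_m^2<\infty$), whereas you run the contrapositive directly, extracting a subsequence with $\Vert w_{m_k}\Vert\to 0$ and concluding that the limit is orthogonal to $\Sc_X$, hence zero.
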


\begin{proof} Let us use a proof by contradiction. Assume that $f_m \to f \neq
0$ as $m\to \infty$, with $f \in X$. As {$\mathrm{span}{(\Sc_X)}$ is dense in $X$}, there exists $\epsilon>0$ such that  $ \sup_{v \in \Sc_X}|\langle
f,\frac{v}{\Vert v\Vert }\rangle| \ge 2 \varepsilon. $  Using the definition of $w_m$
and of $f$ as a limit of $f_m$, we have that there exists $N>0$ such that 
\begin{equation}  \Vert w_m\Vert = \sup_{v \in \Sc_X}|\langle
f_{m-1},\frac{v}{\Vert v\Vert }\rangle| \ge \varepsilon,\quad \forall m \ge N. 
\end{equation} Thanks to 
\eqref{eq:bound_w_wt}, we  have \begin{align*} \Vert f_m\Vert ^2 &= \Vert f_{m-1}\Vert ^2 - \Vert 
\widetilde w_m \Vert ^2 \kappa^2_m \le \Vert f_{m-1}\Vert ^2 - \Vert  w_m \Vert ^2 \mu^2_m, \\     
     &\le \Vert f_N\Vert ^2     - \sum_{i=N+1}^m \mu_i^2 \Vert w_i\Vert ^2 \le \Vert f_N\Vert ^2 -     
      \varepsilon^2 \sum_{i=N+1}^m            \mu_i^2, \end{align*} which
implies that $\{\mu_m\}_{m\ge 0}\in\ell^2$, a contradiction to the
assumption.

\end{proof}

\begin{prpstn}\label{lmm:greedy_cv}
 Assume \eqref{ass:lambda}. Further assume that the sequence $\mu_m$ 
  is non increasing and verifies
  \begin{equation}
    \sum_{m=1}^\infty \frac{\mu_m^2}{m} = \infty \label{eq:condition_mui}.
  \end{equation}
Then the sequence $\{u_m\}_{m\ge 1}$ converges to $u$.
\end{prpstn}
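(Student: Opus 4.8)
The plan is to argue by contradiction, following the weak greedy convergence analysis of \cite{Temlyakov2008}. By Lemma~\ref{lem:nfconverges} the sequence $\{\Vert f_m\Vert\}$ is non-increasing and convergent; denote its limit by $a\ge 0$, so that $\Vert f_m\Vert\ge a$ for every $m$. Combining the identity $\Vert f_{m-1}\Vert^2-\Vert f_m\Vert^2=\kappa_m^2\Vert \widetilde w_m\Vert^2$ (established in the proof of Lemma~\ref{lem:nfconverges}) with the lower bound $\mu_m\Vert w_m\Vert\le\kappa_m\Vert\widetilde w_m\Vert$ from Lemma~\ref{lem:kappa_bound}, summation over $m$ yields $\sum_{m\ge1}\mu_m^2\Vert w_m\Vert^2\le\Vert f_0\Vert^2-a^2<\infty$. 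The whole point is then to show that this finiteness is incompatible with $\sum_m \mu_m^2/m=\infty$ unless $a=0$; I would therefore assume $a>0$ and seek a contradiction.

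The next step exploits the density of $\mathrm{span}(\Sc_X)$ to freeze a good test direction. I would choose $g=\sum_{i=1}^\ell c_i g_i\in\mathrm{span}(\Sc_X)$, with $g_i\in\Sc_X$, $\Vert g_i\Vert=1$, such that $\Vert u-g\Vert$ is as small as desired, and set $A=\sum_i|c_i|$ and $b_m=\langle f_m,g\rangle$. Since $\pm g_i\in\Sc_X$ by property~\eqref{SX_cone} and $\Vert w_{m+1}\Vert=\sigma(f_m;\Sc_X)=\sup_{v\in\Sc_X}\langle f_m,v\rangle/\Vert v\Vert$, one obtains the upper bound $b_m\le A\Vert w_{m+1}\Vert$. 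For the lower bound I would write $b_m=\langle f_m,u\rangle-\langle f_m,u-g\rangle$ and $\langle f_m,u\rangle=\Vert f_m\Vert^2+\langle f_m,u_m\rangle$, and control the cross term by $|\langle f_m,u_m\rangle|\le\Vert w_{m+1}\Vert\sum_{i=1}^m\Vert\widetilde w_i\Vert$ (again because each $\widetilde w_i/\Vert\widetilde w_i\Vert\in\Sc_X$). Writing $S_m=\sum_{i=1}^m\Vert\widetilde w_i\Vert$ and choosing $\Vert u-g\Vert$ small enough to absorb the perturbation into $\tfrac12 a^2$, this produces the key lower bound
\[
\Vert w_{m+1}\Vert\,\big(A+S_m\big)\ \ge\ \tfrac12 a^2,\qquad\text{i.e.}\qquad \Vert w_{m+1}\Vert\ \ge\ \frac{a^2}{2(A+S_m)}.
\]

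It then remains to bound $S_m$ from above. Using $\kappa_i\Vert\widetilde w_i\Vert\le\Vert w_i\Vert$ and $\kappa_i\ge\mu_i/2$ (Lemma~\ref{lem:kappa_bound}), Cauchy--Schwarz gives $S_m\le(\sum_{i\le m}\kappa_i^{-2})^{1/2}(\sum_{i\le m}\kappa_i^2\Vert\widetilde w_i\Vert^2)^{1/2}$, the second factor being bounded by $(\Vert f_0\Vert^2-a^2)^{1/2}$. Here the monotonicity of $\mu_m$ is essential: since $\mu_i^{-2}$ is non-decreasing, $\sum_{i\le m}\kappa_i^{-2}\le 4m\mu_m^{-2}$, whence $S_m=O(\sqrt m/\mu_m)$. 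Substituting into the lower bound yields $\Vert w_{m+1}\Vert^2\gtrsim \mu_m^2/m$ (up to a constant depending on $a$), so that summing the contributions $\mu_{m+1}^2\Vert w_{m+1}\Vert^2$ produces a divergent series, contradicting $\sum_m\mu_m^2\Vert w_m\Vert^2<\infty$ and forcing $a=0$, i.e.\ $f_m\to 0$ and $u_m\to u$.

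The main obstacle is this last, quantitative step: the estimates above must be carried out so that the powers of $\mu_m$ in the final series match exactly the hypothesis $\sum_m\mu_m^2/m=\infty$, rather than some stronger summability condition. This is precisely where the non-orthogonality of the (pure) greedy corrections bites — the cross term $\langle f_m,u_m\rangle$ does not vanish as it would for an orthogonal greedy scheme — and it is handled, as in \cite{Temlyakov2008}, by a careful Abel-type summation that uses the monotonicity of $\mu_m$ to keep the weighted partial sums $\sum_{i\le m}\mu_i^{-2}$ comparable to $m\mu_m^{-2}$. Getting these constants and exponents sharp, rather than the overall structure of the argument, is the delicate part of the proof.
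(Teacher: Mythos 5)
Your preparatory steps are all correct and closely parallel ingredients that the paper itself uses: the summability $\sum_m \kappa_m^2\Vert\widetilde w_m\Vert^2<\infty$ is Lemma \ref{lem:nfconverges}, the comparisons $\mu_m\Vert w_m\Vert\le\kappa_m\Vert\widetilde w_m\Vert$ and $\kappa_m\ge\mu_m/2$ are Lemma \ref{lem:kappa_bound}, and your frozen-element density argument giving $\Vert w_{m+1}\Vert\,(A+S_m)\ge\tfrac12 a^2$ is essentially the argument of Proposition \ref{lem:convergence_fm}. The genuine gap is your last, quantitative step, and it is not a matter of sharpening constants: the Cauchy--Schwarz bound $S_m=O(\sqrt m/\mu_m)$ gives $\Vert w_{m+1}\Vert^2\gtrsim\mu_m^2/m$, so the series you propose to sum is $\sum_m\mu_{m+1}^2\mu_m^2/m$, whose divergence is what you need to contradict $\sum_m\mu_m^2\Vert w_m\Vert^2<\infty$. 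But hypothesis \eqref{eq:condition_mui} only gives $\sum_m\mu_m^2/m=\infty$, and this does \emph{not} imply $\sum_m\mu_m^4/m=\infty$: take $\mu_m^2=1/\log(m+2)$, which is non-increasing with values in $(0,1)$, satisfies $\sum_m\mu_m^2/m=\infty$, yet $\sum_m\mu_m^4/m\le\sum_m 1/(m\log^2(m+2))<\infty$, and a fortiori $\sum_m\mu_{m+1}^2\mu_m^2/m<\infty$. For such a sequence your chain of inequalities is perfectly consistent and no contradiction is reached. The decisive loss is the Cauchy--Schwarz step $\sum_{i\le m}\kappa_i\Vert\widetilde w_i\Vert\le\sqrt m\,\bigl(\sum_{i\le m}\kappa_i^2\Vert\widetilde w_i\Vert^2\bigr)^{1/2}$, which throws away the coupling between $a_{m+1}:=\kappa_{m+1}\Vert\widetilde w_{m+1}\Vert$ and the partial sums $\sum_{i\le m}a_i$; no ``Abel-type summation'' of these same inequalities can recover it.

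What is missing is precisely the nontrivial lemma that the paper imports as a black box: Lemma 2.7 of \cite{Temlyakov2011}, which states that if $\{a_m\}\in\ell^2$ is non-negative and $\{\mu_m\}$ is non-increasing with $\sum_m\mu_m^2/m=\infty$, then $\liminf_{m}\,\mu_{m+1}^{-2}a_{m+1}\sum_{i\le m}a_i=0$. Your inequality can be put exactly in the negated form of that conclusion: keeping $S_m\le\frac{2}{\mu_m}\sum_{i\le m}a_i$ (instead of Cauchy--Schwarz) and using $\Vert w_{m+1}\Vert\le a_{m+1}/\mu_{m+1}$ together with $\mu_{m+1}\le\mu_m\le1$ yields $a_{m+1}\bigl(A+2\sum_{i\le m}a_i\bigr)\ge\tfrac12 a^2\mu_{m+1}^2$; after disposing of the case $\sum_i a_i<\infty$ (which contradicts $\sum_m\mu_m^2=\infty$ directly), this contradicts Temlyakov's lemma and closes your argument. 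So your route is salvageable, but only by citing or reproving that lemma, which is the mathematical core of the statement. For comparison, the paper arranges the proof differently: it bounds $\max_{n<m}|\langle f_n-f_m,f_m\rangle|\le 2\mu_{m+1}^{-2}a_{m+1}\sum_{i\le m}a_i$, applies Lemmas 2.7 and 2.8 of \cite{Temlyakov2011} to deduce that $\{f_m\}$ converges strongly, and then invokes Proposition \ref{lem:convergence_fm} (the density argument, which needs only $\sum_m\mu_m^2=\infty$) to identify the limit as zero. Your approach bypasses Lemma 2.8, but it cannot bypass Lemma 2.7.
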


\begin{proof}  Let two integers $n<m$ and consider   $$   \Vert f_n-f_m\Vert ^2 =
\Vert f_n\Vert ^2 -\Vert f_m\Vert ^2 -2\langle f_n-f_m,f_m \rangle.   $$ Defining $\theta_{n,m}=
|\langle f_n-f_m,f_m \rangle|$ and using Lemma \ref{lem:kappa_bound}, we obtain 
\begin{align*}     \theta_{n,m} &\leq
\sum_{i=n+1}^{m}  |\langle \widetilde w_i,f_m \rangle|    \leq  \Vert w_{m+1}\Vert 
\sum_{i=1}^{m} \Vert  \widetilde w_i \Vert  \leq  2\frac{\kappa_{m+1} \Vert  \widetilde
w_{m+1}\Vert }{\mu^2_{m+1}}   \sum_{i=1}^{m} \kappa_i \Vert  \widetilde w_i \Vert .
\end{align*} Lemma \ref{lem:nfconverges} implies that $\kappa_m\Vert  \widetilde
w_m\Vert  \in \ell^2$.  Together with assumption \eqref{eq:condition_mui}, and
using {Lemma 2.7}   in \cite{Temlyakov2011}, we obtain that   $  
\lim\inf_{m\rightarrow\infty}\max_{n<m} \theta_{n,m} = 0.   $  Lemma 2.8 in
\cite{Temlyakov2011} then proves that the sequence    $\{f_m\}_{m\ge 1}$ converges.
Noting that \eqref{eq:condition_mui} implies that    $\{\mu_m\}_{m=1}^\infty
\notin \ell^2$, Lemma \ref{lem:convergence_fm} allows    to conclude  the proof.
\end{proof}

{
In practice, condition \eqref{eq:condition_mui} can be satisfied by the following sufficient condition on the sequence $\widetilde{\alpha}_m$, which is a computable sequence.  
\begin{crllr}
 If there exists a constant $0<\epsilon <1$, independent of $m$, such that
 \begin{equation}
 \widetilde{\alpha}^2_m  \leq \frac{1-\epsilon}{(1+\gamma_m)^2-\epsilon}, \label{eq:final_condition}
 \end{equation}
 then the sequence $\{u_m\}_{m\ge 1}$ converges to $u$.
\end{crllr}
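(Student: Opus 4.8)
The plan is to deduce the corollary from Proposition \ref{lmm:greedy_cv} by showing that the computable condition \eqref{eq:final_condition} forces the sequence $\{\mu_m^2\}$ to be bounded below by the fixed positive constant $\epsilon$. Once such a uniform lower bound is available, the divergence condition \eqref{eq:condition_mui} holds trivially, the standing assumption \eqref{ass:lambda} comes for free, and the monotonicity hypothesis can be arranged by replacing $\mu_m$ with a constant lower bound.

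First I would pass from the computable quantity $\widetilde{\alpha}_m$ to the quantity $\alpha_m$. By the left inequality in \eqref{eq:rel_alpha} we have $\alpha_m\le\widetilde{\alpha}_m$, so \eqref{eq:final_condition} yields
\begin{equation*}
\alpha_m^2 \le \widetilde{\alpha}_m^2 \le \frac{1-\epsilon}{(1+\gamma_m)^2-\epsilon}.
\end{equation*}
Here $(1+\gamma_m)^2-\epsilon\ge 1-\epsilon>0$ and, since $\alpha_m\in[0,1)$, also $1-\alpha_m^2>0$. Using the definition $\mu_m^2=\frac{1-(1+\gamma_m)^2\alpha_m^2}{1-\alpha_m^2}$ from Lemma \ref{lem:kappa_bound}, a direct rearrangement gives, for $0<\epsilon<1$,
\begin{equation*}
\mu_m^2\ge\epsilon \iff 1-(1+\gamma_m)^2\alpha_m^2 \ge \epsilon(1-\alpha_m^2) \iff \alpha_m^2 \le \frac{1-\epsilon}{(1+\gamma_m)^2-\epsilon},
\end{equation*}
which is exactly the inequality just established. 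Hence $\mu_m^2\ge\epsilon$ for all $m$. In particular $\mu_m^2>0$ forces $(1+\gamma_m)^2\alpha_m^2<1$, so assumption \eqref{ass:lambda} is automatically satisfied and $\mu_m$ is well defined.

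Finally I would conclude via Proposition \ref{lmm:greedy_cv}. Since $\mu_m\ge\sqrt{\epsilon}$ for every $m$, the constant sequence $\bar\mu_m:=\sqrt{\epsilon}$ is non-increasing, satisfies $\bar\mu_m\le\mu_m$, and verifies $\sum_{m\ge1}\bar\mu_m^2/m=\epsilon\sum_{m\ge1}1/m=\infty$. The proof of Proposition \ref{lmm:greedy_cv} (together with Lemma \ref{lem:nfconverges} and Proposition \ref{lem:convergence_fm} invoked therein) uses $\mu_m$ only through the lower bounds supplied by Lemma \ref{lem:kappa_bound}, namely $\mu_m\Vert w_m\Vert\le\kappa_m\Vert\widetilde w_m\Vert$ and $\mu_m/2\le\kappa_m$, all of which remain valid when $\mu_m$ is replaced by the smaller quantity $\bar\mu_m$. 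Running that argument verbatim with $\bar\mu_m$ in place of $\mu_m$ then gives the convergence $u_m\to u$.

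The only genuine obstacle is the monotonicity hypothesis of Proposition \ref{lmm:greedy_cv}: condition \eqref{eq:final_condition} controls $\mu_m$ from below but says nothing about its monotonic behaviour, so one cannot apply the proposition to $\{\mu_m\}$ directly. The resolution is the passage to the constant lower bound $\bar\mu_m=\sqrt{\epsilon}$; once this reduction is justified by observing that the proposition's proof only ever needs lower bounds on $\mu_m$, the remainder is the elementary algebra of the equivalence established in the second paragraph.
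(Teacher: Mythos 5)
Your proof is correct, and its core is the same as the paper's: both pass from $\widetilde{\alpha}_m$ to $\alpha_m$ via \eqref{eq:rel_alpha}, rearrange \eqref{eq:final_condition} into the two consequences $(1+\gamma_m)^2\alpha_m^2<1$ (i.e.\ assumption \eqref{ass:lambda}) and $\mu_m^2\ge\epsilon$ (hence the divergence condition \eqref{eq:condition_mui}), and then conclude by Proposition \ref{lmm:greedy_cv}. Where you genuinely add something is on the monotonicity hypothesis of Proposition \ref{lmm:greedy_cv}: the paper's proof invokes that proposition without comment, even though a uniform lower bound $\mu_m^2\ge\epsilon$ does not make $\{\mu_m\}$ non-increasing, so the paper's own argument is incomplete on this point. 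You identify the obstacle and repair it by substituting the constant sequence $\bar\mu_m=\sqrt{\epsilon}$, after verifying that Lemma \ref{lem:kappa_bound}, the proof of Proposition \ref{lem:convergence_fm}, and the proof of Proposition \ref{lmm:greedy_cv} use $\mu_m$ only through the lower bounds $\bar\mu_m\Vert w_m\Vert\le\kappa_m\Vert\widetilde w_m\Vert$ and $\bar\mu_m/2\le\kappa_m$; indeed, the only role of the non-increasing assumption there is to pull $1/\mu_i$ out of the sum $\sum_i\kappa_i\Vert\widetilde w_i\Vert/\mu_i$, which is trivial for a constant sequence, and the condition $\sum_m\bar\mu_m^2/m=\infty$ is immediate. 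So your write-up is not just a match for the paper's proof but a more rigorous version of it; an equivalent fix would be to restate Proposition \ref{lmm:greedy_cv} with ``$\mu_m$ bounded below by a positive constant'' in place of ``non-increasing''.
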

}
{
\begin{proof}
Under assumption \eqref{eq:final_condition}  and using relation \eqref{eq:rel_alpha}, it holds that for all $m\geq0$
$$
  \alpha^2_m  \leq  \frac{1-\epsilon}{(1+\gamma_m)^2-\epsilon}\quad  \Rightarrow  \quad 
 (1+\gamma_m)^2\alpha_m^2 \leq 1-\epsilon(1-\alpha_m^2)<1.
 $$
which implies condition \eqref{ass:lambda}. Moreover, we have 
$$
\epsilon(1-\alpha_m^2) \leq 1-(1+\gamma_m)^2\alpha_m^2 \quad  \Rightarrow \quad \epsilon \leq \frac{1-(1+\gamma_m)^2\alpha_m^2}{(1-\alpha_m^2)} = \mu_m^2,
$$
which implies  condition \eqref{eq:condition_mui}. Proposition \ref{lmm:greedy_cv} ends the proof.
\end{proof}
}
{
\begin{rmrk}
 From a practical point of view, condition \eqref{eq:final_condition} provides a sufficient criterion on $\gamma_m$ (or equivalently on $\delta_m$). Note that $\widetilde \alpha_m$ depends on $\widetilde w_m$ which depends on the choice of the precision $\gamma_m$. Therefore, \eqref{eq:final_condition} is an implicit condition on $\gamma_m$ which suggests an iterative strategy for the control of the condition. A possible strategy would be to adapt the parameter $\gamma_m$  during the iterations of the gradient type algorithm used to compute the $\widetilde w_m$.
\end{rmrk}
}


\section{Numerical example}\label{sec:example}

In this section, we apply the proposed method to the numerical solution 
of  a stochastic steady reaction-advection-diffusion problem. 

\subsection{Stochastic reaction-advection-diffusion problem} \label{sec:problem_setting}
{
We consider the following steady reaction-advection-diffusion problem on a two-dimensional unit square domain $\Omega =[0,1]^2$ (see Figure \ref{fig:geometry}):
\begin{align}
  -\nabla \cdot (\kappa \nabla u)+ c \cdot \nabla  u +a u= f \quad&\text{ in } \Omega, \label{eq::addiffre}\\
u=0 \quad &\text{ on } \partial\Omega. \nonumber
\end{align}
 First, we consider a constant diffusion $\kappa=1$}. The advection coefficient $c$ and the reaction coefficient $a$  are considered as random and are given by 
$c= \xi_{1} c_{0}$ and $a=\exp(\xi_2)$, where $\xi_{1} \sim U(-350,350)$ and $\xi_{2}\sim U(\log(0.1),\log(10))$  are independent uniform random variables, and 
 $c_{0}(x)=(x_2-1/2 , 1/2-x_1)$, $x=(x_{1},x_{2})\in \Omega$. We denote by $\Xi_{1} = ]\text{-}350,350[$ and $\Xi_{2}=]\log(0.1),\log(10)[$, and we denote by $(\Xi,\mathcal{B}(\Xi),P_{\xi})$ the probability space induced by $\xi=(\xi_{1},\xi_{2})$, with $\Xi=\Xi_{1}\times \Xi_{2}$ and $P_{\xi}$ the probability law of $\xi$. The external source term $f$ is given by 
$
f(x) = I_{\Omega_{1}}(x) - I_{\Omega_{2}}(x),
$ where $\Omega_1= ]0.45,0.55[\times]0.15,0.25[$ and $\Omega_2=]0.45,0.55[\times]0.75,0.85[$, and where $I_{\Omega_{k}}$ denotes the indicator function of $\Omega_{k}$. 

\begin{figure}[h]
  \centering
  \includegraphics[scale=0.75]{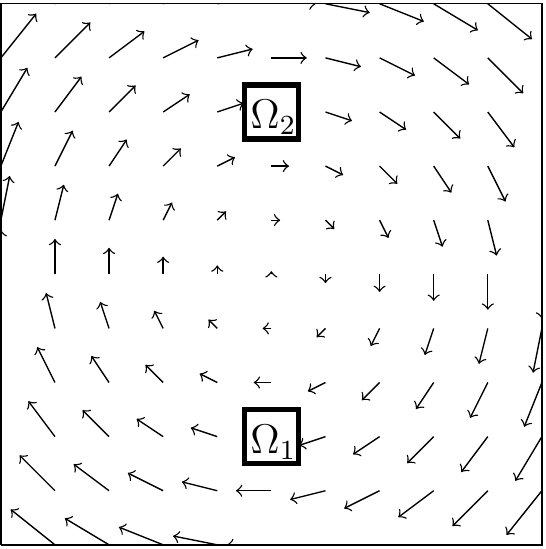}
  \caption{Example : reaction-advection-diffusion problem.}
  \label{fig:geometry}
\end{figure}
Let ${V}=\xHone_{0}(\Omega)$ and ${S}=\xLtwo(\Xi,dP_{\xi})$. We introduce 
approximation spaces $V_N\subset V$ and $S_P\subset S$, with $N=\dim(V_N)$ and $P=\dim(S_P)$.
$V_N$ is a $\mathbb{Q}_1$ finite element space associated with a uniform mesh of 1600 elements such that $N=1521$.
We choose $S_{P} = S^{\xi_1}_{p_1}\otimes S^{\xi_2}_{p_2}$, where $S^{\xi_1}_{p_1}$ is the space of piecewise polynomials of degree 5 on $\Xi_{1}$ associated with the partition $\{]\text{-}350,0[,]0,350[\}$ of $\Xi_{1}$, and $S^{\xi_2}_{p_2}$ is the space of polynomials of degree $5$ on $\Xi_{2}$. This choice results in $P=72$. The Galerkin approximation $u\in V_{N}\otimes S_{P} \subset V\otimes S$ of the solution of \eqref{eq::addiffre} is defined by the following equation\footnote{The mesh P\'eclet number is sufficiently  small so that an accurate Galerkin approximation can be obtained without introducing a stabilized formulation.}:
\begin{equation}
\int_{\Xi}\int_{\Omega} \left( \nabla  u \cdot\nabla  v +  c \cdot \nabla u v+ a  uv\right) dx~dP_{\xi}  = 
\int_{\Xi}\int_{\Omega} f~ v~dx~dP_{ \xi},\label{eq:FV}
\end{equation}
for all $v\in V_{N}\otimes S_{P}$. Letting $V_{N}\otimes S_{P}=span\{\varphi_{i}\otimes \psi_{j};1\le i\le N,1\le j\le P\}$, the Galerkin 
approximation  $u = \sum_{i=1}^{N}\sum_{j=1}^{P} u_{ij} \varphi_{i}\otimes \psi_{j} $ can be identified with its set of coefficients on the chosen basis, still denoted $u$, which is a tensor  
\begin{equation}
u\in X=\xR^N \otimes \xR^P \quad \text{such that} \quad Au=b,\label{eq:algebrique}
\end{equation}
where $b = b^{x}\otimes b^{\xi}$, with $b^{x}_{i}=\int_{\Omega} f\varphi_{i}$ and $b^{\xi}_{j}=\int_{\Xi} \psi_{j} dP_{\xi}$, and where $A$ is a rank-3 operator such that
$A = D^{x}\otimes M^{\xi} + C^{x} \otimes H^{\xi_{1}} + R^{x}\otimes H^{\xi_{2}}$, with $D^{x}_{ik} = \int_{\Omega}\nabla \varphi_{i} \cdot\nabla \varphi_{k} dx$,  $C^{x}_{ik} = \int_{\Omega}  \varphi_{i} c_{0}\cdot\nabla \varphi_{k}  dx$, $R^{x}_{ik} = \int_{\Omega}  \varphi_{i} \varphi_{k}  dx$, $M^{\xi}_{jl} =\int_{\Xi}\psi_{j}(y)\psi_{l}(y) dP_{\xi}(y)$, $H^{\xi_{n}}_{jl} = \int_{\Xi}y_{n}\psi_{j}(y)\psi_{l}(y) dP_{\xi}(y)$,  $n=1,2$. Here, we use orthonormal basis functions $\{\psi_{j}\}$ in $S_{P}$, so that $M^{\xi}=I_{P}$, the identity matrix in $\mathbb{R}^{P}$.



\subsection{Comparison of minimal residual methods} \label{sec:comp_minres}
In this section, we present numerical results concerning the approximate ideal minimal residual method (A-IMR) applied to  the algebraic system of equations \eqref{eq:algebrique} in tensor format. This method provides an approximation of the best approximation of $u$ with respect to a norm $\Vert\cdot\Vert_{X}$ that can be freely chosen a priori. Here, we consider the application of the method for two different norms. 
We first consider the natural canonical norm on $X$, denoted $\Vert \cdot\Vert _2$ and defined by 
\begin{equation}
\Vert v\Vert _2^{2}= {\sum_{i=1}^N\sum_{j=1}^P (v_{ij})^2  }.
\end{equation}
This choice corresponds to an operator $R_X=I_{X}=I_N\otimes I_P$, where $I_N$ (resp. $I_P$) is the identity in $\mathbb{R}^{N}$ (resp. $\mathbb{R}^{P}$). We also consider a weighted canonical norm, denoted $\Vert \cdot\Vert _w$ and defined by
\begin{equation}\label{eq:weighted_norm}
\Vert v\Vert _w^{2}= { \sum_{i=1}^N\sum_{j=1}^P \left( w(x_i) v_{ij} \right)^2  },
\end{equation}
where $w:\Omega\rightarrow \xR$ is a weight function and the $x_{i}$ are the nodes associated with finite element shape functions $\varphi_{i}$. This norm allows to give a more important weight to a particular region $D\subset \Omega$, that may be relevant if one is interested in the prediction of a quantity of interest that requires a good precision of the numerical solution in  this particular region (see section \ref{sec:interest_weigthed_norm}). 
This choice corresponds to an operator $R_X=D_{w}\otimes I_P$, with $D_{w}=\mathrm{diag}(w(x_1)^2,\hdots,w(x_{N})^{2})$.

The A-IMR provides an approximation $\widetilde u \in \Sc_X$ of the $\Vert \cdot\Vert _X$-best approximation of $u$ in $\Sc_{X}$ (that means an approximation of an element in $\Pi_{\Sc_{X}}(u)$), where 
 $\Vert \cdot\Vert _X$ is either $\Vert \cdot\Vert _2$ or $\Vert \cdot\Vert _w$.  The set $\Sc_{X}$ is taken as the set $\mathcal{R}_r(X)$ of rank-$r$ tensors in $X=\mathbb{R}^{N}\otimes \mathbb{R}^{P}$.
The dimension of $X$ is about 75,000 so that the exact solution $u$ of \eqref{eq:algebrique}  can be computed and used as a reference solution. 
We note that both norms are induced norms in $\mathbb{R}^{N}\otimes \mathbb{R}^{P}$ (associated with rank one operators $R_{X}$) so that the 
$\Vert \cdot\Vert _X$-best approximation of $u$ in $\Sc_{X}$ is a rank-$r$ SVD that can be computed exactly using classical algorithms (see section \ref{sec:best_approx_comput}).\footnote{Note that different truncated SVD are obtained when $\mathbb{R}^{N}$ is equipped with different norms.} 
For the construction of an approximation in $\mathcal{R}_r(X)$ using A-IMR, we consider two strategies: the
 direct approximation in $\mathcal{R}_r(X)$ using Algorithm \ref{alg:procedure1} with $\mathcal{S}_{X}=\mathcal{R}_r(X)$, and a greedy algorithm that consists in a series of $r$ corrections in $\mathcal{R}_1(X)$ computed using 
 Algorithm \ref{alg:procedure1} with $\mathcal{S}_{X}=\mathcal{R}_1(X)$ and with an updated residual $b$ at each correction.

The A-IMR will be compared to a standard approach, denoted CMR, which consists in minimizing the canonical norm of the residual of equation \eqref{eq:algebrique}, that means in solving 
 \begin{equation}
  \min_{v \in \Sc_X} \Vert Av-b\Vert _2.
  \label{eq:minresCMR}
\end{equation}
This latter approach has been introduced and analyzed in different papers, using either direct minimization or greedy rank-one algorithms \cite{Beylkin2005,Doostan2009,Ammar2010},  and is known to suffer from ill-conditioning of the operator $A$.
We note that this approach corresponds to choosing $R_X=A^*A$ and $R_{Y} = I_{X}=  I_{N}\otimes I_{P}$.

\subsubsection{Natural canonical norm $\Vert \cdot\Vert _2 $}

First, we compare both greedy and direct algorithms for $\Vert \cdot\Vert _X=\Vert \cdot\Vert _2$, using either CMR or A-IMR with different precisions $\delta$. The convergence curves with respect to the  rank are shown in Figure \ref{fig:result_direct_greedy_NC}, where the error is measured in the $\Vert \cdot\Vert _2$ norm.
Concerning the direct approach, we observe that the different algorithms have roughly the same rate of convergence. The A-IMR convergence curves are close to the optimal SVD {(corresponding to $\widetilde u_2$)} for a wide range of values of $\delta$. One should note that A-IMR seems to provide good approximations also for the value $\delta =0.9$ which is greater than the theoretical bound $0.5$ ensuring the convergence of the gradient-type algorithm.
Concerning the greedy approach, we observe a significant difference between A-IMR and CMR. We note that A-IMR is close to the optimal SVD up to a certain rank (depending on $\delta$) after which the convergence rate decreases but remains better than the one of CMR. Finally, one should note that using a precision $\delta=0.9$ for A-IMR yields less accurate approximations than CMR. However, A-IMR provides better results than CMR once the precision $\delta$ is lower than  $0.5$.

\begin{figure}[h]
  \centering
  \includegraphics[scale=0.85]{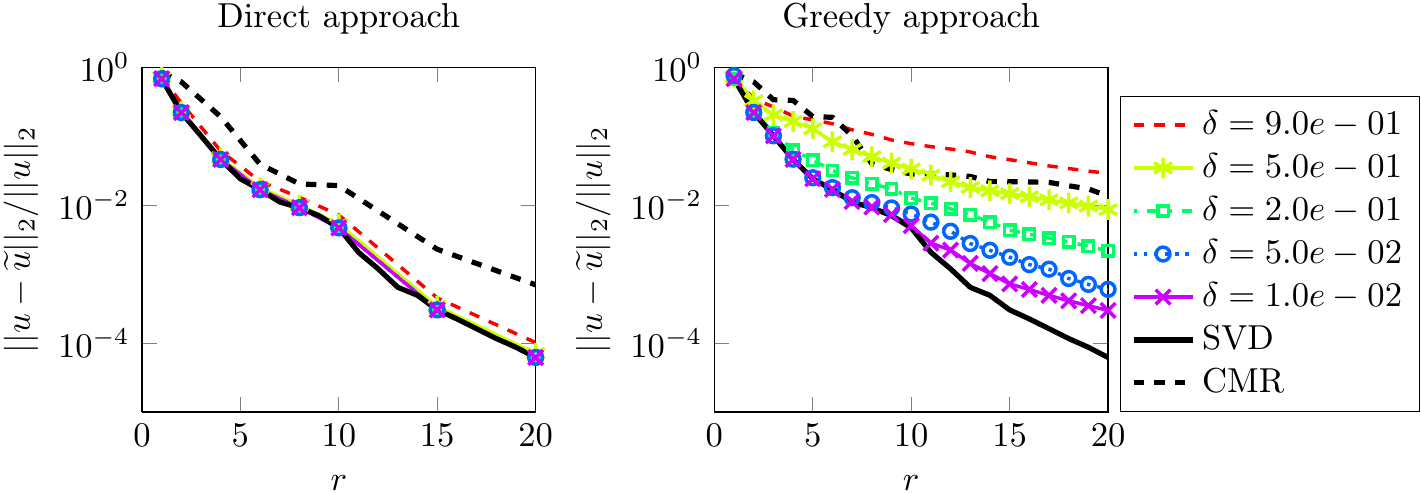}
  \caption{Comparison of minimal residual methods for $\Sc_X= \Rc_r(X)$ and $\Vert \cdot\Vert _X=\Vert \cdot\Vert _2$. Convergence with the rank $r$ of the approximations obtained with CMR or A-IMR with different precisions $\delta$, and with direct  (left) or greedy rank-one (right) approaches. }
  \label{fig:result_direct_greedy_NC}
\end{figure}

\subsubsection{Weighted norm $\Vert \cdot\Vert _w $ }

Here, we perform the same numerical experiments as previously using the weighted norm $\Vert \cdot\Vert _X=\Vert \cdot\Vert _w$, with $w$ equal to $10^3$ on $D=[0.15,0.25]\times[0.45,0.55]$ and $w=1$ on $\Omega\setminus D$. 
The convergence curves with respect to the  rank are plotted on Figure  \ref{fig:result_direct_greedy_NW}. 
The conclusions are similar to the case $\Vert \cdot\Vert _X=\Vert \cdot\Vert _2$, although  the use of the weighted norm seems to slightly deteriorate the convergence properties of A-IMR. However, the direct A-IMR still provides better approximations than the direct CMR, closer to the reference SVD {(denoted by $\tilde{u}_w$)} for different values of precision $\delta$.

\begin{figure}[h]
  \centering
  \includegraphics[scale=0.85]{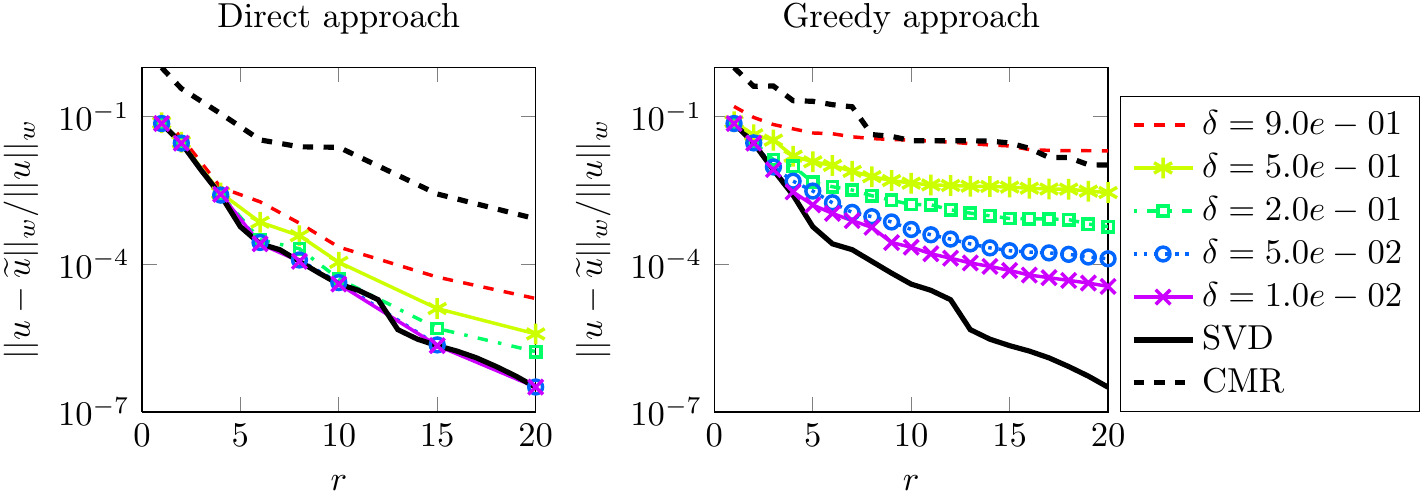}
  \caption{Comparison of minimal residual methods for $\Sc_X= \Rc_r(X)$ and $\Vert \cdot\Vert _X=\Vert \cdot\Vert _w$. Convergence with the rank of the approximations obtained with CMR or A-IMR with different precisions $\delta$, and with direct  (left) or greedy rank-one (right) approaches. }
  \label{fig:result_direct_greedy_NW}
\end{figure}

\subsubsection{Interest of using a weighted norm} \label{sec:interest_weigthed_norm}

Here, we illustrate the interest of using the weighted  norm rather than the natural canonical norm when one is interested in computing a quantity of interest. For the sake of readability, we let $\widetilde u_w$ (resp. $\widetilde u_2$) denote the best approximation of $u$ in $\Rc_r({X})$ with respect  to the norm $\Vert \cdot\Vert _w$ (resp. $\Vert \cdot\Vert _2$).
Figure \ref{fig:compare_metric}  illustrates the convergence with $r$ of these approximations. We observe that approximations $\widetilde u_w$ and $\widetilde u_2$ are of the same quality when the error is measured with the norm $\Vert \cdot\Vert_{2}$, while $\widetilde u_w$ is a far better approximation than $\widetilde u_2$ (almost two orders of magnitude) when the error is measured with the norm $\Vert\cdot\Vert_{w}$. We observe that $\widetilde u_w$ converges faster to $u$ with $\Vert \cdot\Vert _w$ than $\widetilde u_2$ with $\Vert \cdot\Vert _2$.
For example, with a rank $r=9$, $\widetilde u_w$ has a $\Vert \cdot\Vert _w$-error of $10^{4}$ while $\widetilde u_2$ has a $\Vert \cdot\Vert _2$-error of $10^{2}$. On Figure  \ref{fig:Modes}, plotted are the spatial modes of the rank-$r$ approximations 
 $\widetilde u_2$ and $\widetilde u_w$. These spatial modes are significantly different and obviously capture different features of the solution.
 
 \begin{figure}[h]
  \centering
  \includegraphics[scale=0.9]{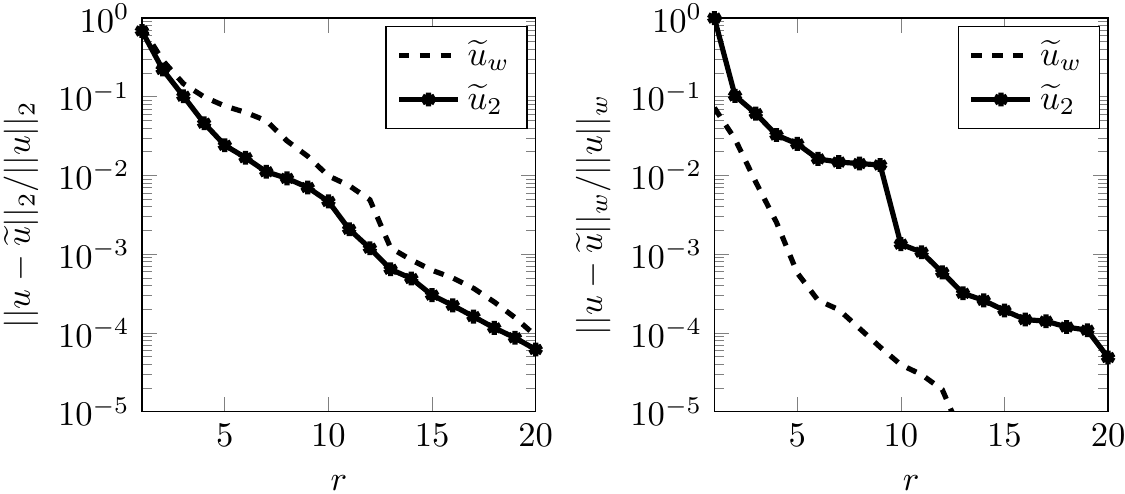}
  \caption{Convergence of best rank-$r$ approximations $\widetilde u_{2}$ and $\widetilde u_w$ of the solution $u$ measured with the natural canonical norm $\Vert\cdot\Vert_{2}$ or the weighted norm $\Vert\cdot\Vert_{w}$.}
  \label{fig:compare_metric}
\end{figure}

\begin{figure}[h]
  \centering
  \includegraphics[width=0.8\textwidth]{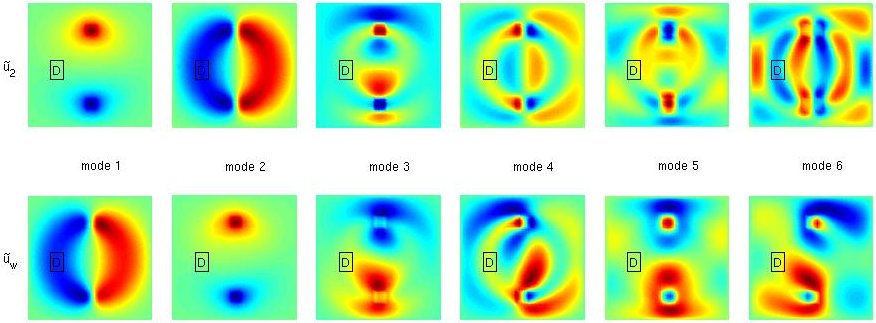}
  \caption{Comparison of the first spatial modes of the rank-$r$ approximations $\widetilde u$ and $\widetilde u_w$.}
  \label{fig:Modes}
\end{figure}

Now, we introduce a quantity of interest $Q$ which is the spatial average of $u$ on subdomain $D$:
\begin{equation}
Q(u) =  \frac{1}{|D|} \int_D u~\text{d}x.
\label{eq:QOIdef}
\end{equation}
Due to the choice of norm, $\widetilde u_w$ is supposed to be more accurate than $\widetilde u_{2}$ in the subdomain  $D$, and therefore, $Q(\widetilde u_w)$  is supposed to provide a better estimation of $Q(u)$ than {$Q(\widetilde u_2)$}. 
This is confirmed by Figure  \ref{fig:convergence_Q}, where we have plotted the convergence with the rank of the statistical mean and variance of $Q(\widetilde u_w)$ and $Q(\widetilde u_2)$.  With only a rank $r=5$, $\widetilde u_w$ gives a precision  of $10^{-7}$ on the mean, whereas $\widetilde u_2$ gives only a precision of $10^{-2}$. In conclusion, we observe that a very low-rank approximation $\widetilde u_{w}$ is able to provide a very good approximation of the quantity of interest.

\begin{figure}[h]
   \centering
   \subfigure[Error on the mean value of $Q(\widetilde u)$]{
     \centering
     \includegraphics[scale=0.95]{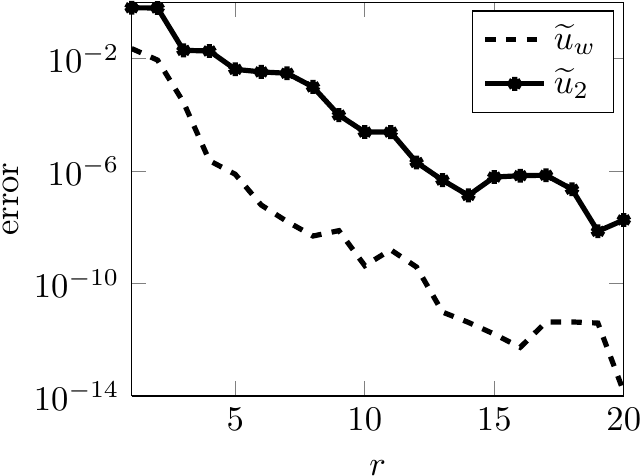}
     }
   \subfigure[Error on the variance of $Q(\widetilde u)$]{
     \centering
     \includegraphics[scale=0.95]{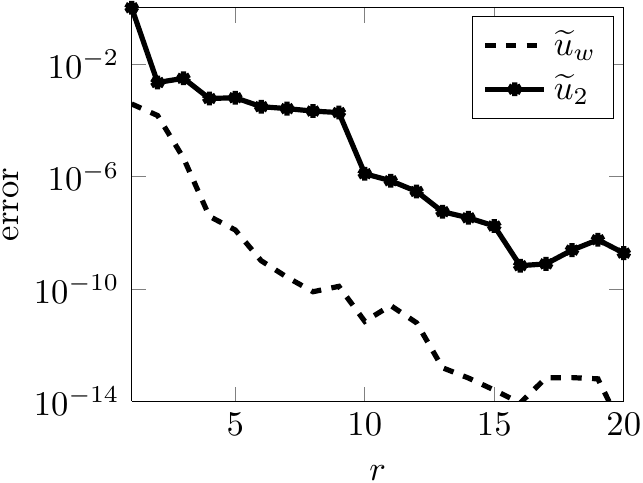}
     }
   \caption{Convergence with the rank of  the mean (left) and variance (right) of $Q(\widetilde u_2)$ and $Q(\widetilde u_w)$. Relative error with respect to the mean and variance of the reference solution $Q(u)$.}
   \label{fig:convergence_Q}
\end{figure}

\subsection{Properties of the algorithms}

Now, we detail some numerical aspects of the proposed methodology.
We first focus on the gradient-type algorithm, and then on evaluations of the map $\Lambda^{\delta}$ for the approximation of residuals.

\subsubsection{Analysis of the gradient-type algorithm}\label{sec:analysis_gradient}

The behavior of the gradient-type algorithm for different choices of norms $\Vert \cdot\Vert_{X}$ is very similar, so we only illustrate the case where $\Vert \cdot\Vert_{X}= \Vert \cdot\Vert_{2}$.
The convergence of this algorithm is plotted in Figure \ref{fig:gradient_iter_cv} for the case $\Sc_{X}=\Rc_{10}(X)$. It is in very good agreement with theoretical expectations (Proposition \ref{prop:gradient_algorithm_perturbed_convergence}):
we first observe a linear convergence with a convergence rate that depends on $\delta$, and then a stagnation within a neighborhood of the solution with an error depending on $\delta$.
\begin{figure}[h]
  \centering
  \includegraphics[scale=0.85]{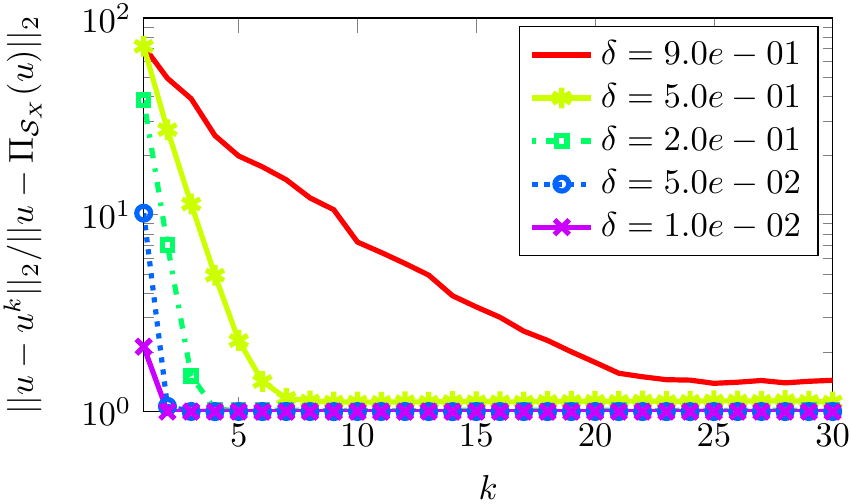}
  \caption{Convergence of the gradient-type algorithm for different values of the relative precision $\delta$, for $\mathcal{S}_X=\mathcal{R}_{10}(X)$ and $\Vert\cdot\Vert_{X}=\Vert\cdot\Vert_{2}$.}
  \label{fig:gradient_iter_cv}
\end{figure}
The gradient-type algorithm is then applied for subsets $\Sc_X=\mathcal{R}_{r}(X)$ with different ranks $r$. The estimate 
of the linear convergence rate $\rho$ is given in Table \ref{Tab1}.
We observe that for all values of $r$, $\rho$ takes values closer to $\delta$ than to the theoretical bound $2\delta$ of Proposition \ref{prop:gradient_algorithm_perturbed_convergence}.
This means that the theoretical bound of the convergence rate overestimates the effective one, and the algorithm converges faster than expected.
\begin{table}[h]
\begin{center}
\footnotesize
\begin{tabular}{|c|c|c|c|c|c|}\hline 
 $\delta$   &0.90    &0.50    &0.20       &0.05       &0.01       \\ \hline
 $r=4$      &0.78    &0.36    &$\approx 0$&$\approx 0$&$\approx 0$\\
 $r=6$      &0.83    &0.45    &0.165      &$\approx 0$&$\approx 0$\\
 $r=10$     &0.82    &0.42    &0.183      &$\approx 0$&$\approx 0$\\
 $r=15$     &0.84    &0.47    &0.189      &0.047      &$\approx 0$\\
 $r=20$     &0.86    &0.48    &0.197      &0.051      &0.011      \\ \hline
\end{tabular}\normalsize
\end{center}
\caption{Estimation of the convergence rate $\rho$ of the gradient-type algorithm (during the linear convergence phase) for different subsets $\Sc_X=\Rc_r(X)$, and for $\Vert \cdot\Vert _X=\Vert \cdot\Vert _2$.}
\label{Tab1}
\end{table}
Now, in order to evaluate the quality of the resulting approximation, we compute the error after the stagnation phase has been reached. More precisely, we compute the  value
$$
\widetilde\gamma_{k} = \frac{\Vert u^{k}-u\Vert _X}{\Vert u-\Pi_{\mathcal{S}_X}(u)\Vert _X} -1,
$$
for $k=100$.  Values of $\widetilde\gamma_{100}$
are summarized in Table \ref{Tab2} and are compared to the theoretical upper bound $\gamma=2\delta/(1-2\delta)$ 
given by Proposition \ref{prop:gradient_algorithm_perturbed_convergence}. Once again, one can observe that the effective error of the resulting approximation is lower than the predicted value regardless of the choice of $\Rc_r(X)$.
\begin{table}[h]
\begin{center}
\footnotesize
\begin{tabular}{|c|c|c|c|c|c|c|c|c|} \hline 
$\delta$                &  0.90    &  0.50   &  0.20   &  0.05   &0.01   \\ \hline
$2\delta/(1-2\delta)$   &     -    &    -    &6.6e-1   &1.1e-1   &2.1e-2 \\ \hline
$r=4$			&  3.3e-1  &5.6e-2   &4.9e-3   &3.5e-4   &3.0e-5 \\
$r=6$			&  3.0e-1  &6.8e-2   &1.1e-2   &8.6e-4   &8.0e-5 \\
$r=10$			&  5.2e-1  &1.3e-1   &1.7e-2   &1.8e-3   &3.3e-5 \\
$r=15$			&  4.9e-1  &1.1e-1   &1.5e-2   &1.0e-3   &7.5e-5 \\
$r=20$			&  6.4e-1  &1.5e-1   &1.9e-2   &1.2e-3   &7.3e-5 \\ \hline
\end{tabular}
\normalsize
\end{center}
\caption{Final approximation errors (estimated by $\widetilde\gamma_{100}$) for different subsets $\Sc_X=\Rc_r(X)$ and different precisions $\delta$. Comparison with the theoretical upper bound $2\delta/(1-2\delta)$.}
\label{Tab2}
\end{table}




Now,  we focus on numerical estimations of the error  $\Vert u-u^k\Vert _X$.
It has been pointed out in Section \ref{sec:error_indicator} that $\hat \epsilon^k$, defined in Eq. \eqref{eq:error_estimator_projection}, should provide a good error estimator with effectivity index 
  $\hat \tau^k \in (1,(1-\delta^2)^{-1/2})$.
For $\delta=0.2$ and $\Sc_X=\Rc_{10}(X)$, numerical values taken by $\hat \tau^k$  during the gradient-type algorithm are plotted on Figure \ref{fig:eff_index} and are compared to the expected theoretical values of its lower and upper bounds $1$ and $(1-\delta^2)^{-1/2}$ respectively.
We observe that the theoretical upper bound is strictly satisfied, while the lower bound is almost but not exactly satisfied. 
This violation of the theoretical lower bound is explained by the fact that the precision $\delta$ is not satisfied at each iteration of the gradient-type algorithm due to the use of a heuristic convergence criterion in the computation of residuals  (see next section \ref{sec:construction_lambda}). {However, although it does not provide a controlled error estimation}, the error indicator based on the computed residuals is of very good quality. 

\begin{figure}[h]
  \centering
  \includegraphics[scale=0.9]{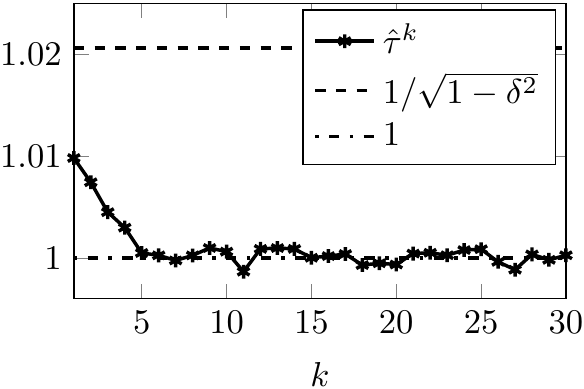}
  \caption{Effectivity index $\hat \tau^k$ of the error estimator $\hat\epsilon^{k} $ at different iterations $k$ of the gradient-type algorithm, with  $\mathcal{S}_X=\mathcal{R}_{10}(X)$ and $\delta=0.2$. \label{fig:eff_index}}
\end{figure}

\subsubsection{Application of $\Lambda^{\delta}$ for the approximation of residuals}\label{sec:construction_lambda}

We study the behavior of the updated  greedy algorithm described in Section  \ref{lambdadeltaalgo} for the computation of an approximation $y^{k}_{m}=\Lambda^{\delta}(r^{k})$ of the residual $r^{k}$ during the gradient-type algorithm. {Here, we use the particular strategy which consists in updating functions associated to each dimension $\mu\in I = \{1,2\}$ (steps (2)-(3) are performed two times per iteration)}. We first validate the ability of the heuristic stopping {criterion} \eqref{eq:stoppingcriterion} to ensure a prescribed relative precision $\delta$. Let $M=M(\delta)$ denote the iteration for which the condition $e_{M}^p\leq \delta$ is satisfied. The exact relative error $e_M = \Vert y^{k}_{M} - r^{k} \Vert_{Y}/\Vert r^{k}\Vert_{Y}$ is computed using a reference computation of $r^{k}$, and we define the effectivity index $\lambda^p_M=e_M^p/e_M$. Figure \ref{fig:gamma} shows the convergence of this effectivity index with respect to $p$, when using the natural canonical norm $\Vert\cdot\Vert_{2}$ or 
the weighted norm $\Vert\cdot\Vert_{w}$. We observe that $\lambda^p_M$ tends to $1$ as $p\to\infty$, as it was expected since the sequence  $\{y^k_m\}_{m\ge 1} $ converges to $r^k$. However, we clearly observe that the quality of the error indicator differs for the two different norms. 
When using the weighted norm, it appears that a large value of $p$ (say $p\geq20$) is necessary to ensure $\lambda^p_M \in [0.9,1]$, while $p\le 10$ seems sufficiently large when using the natural canonical norm. That simply reflects a slower convergence of the greedy algorithm when using the weighted norm. 

\begin{figure}[h]
  \centering
  \includegraphics[scale=0.9]{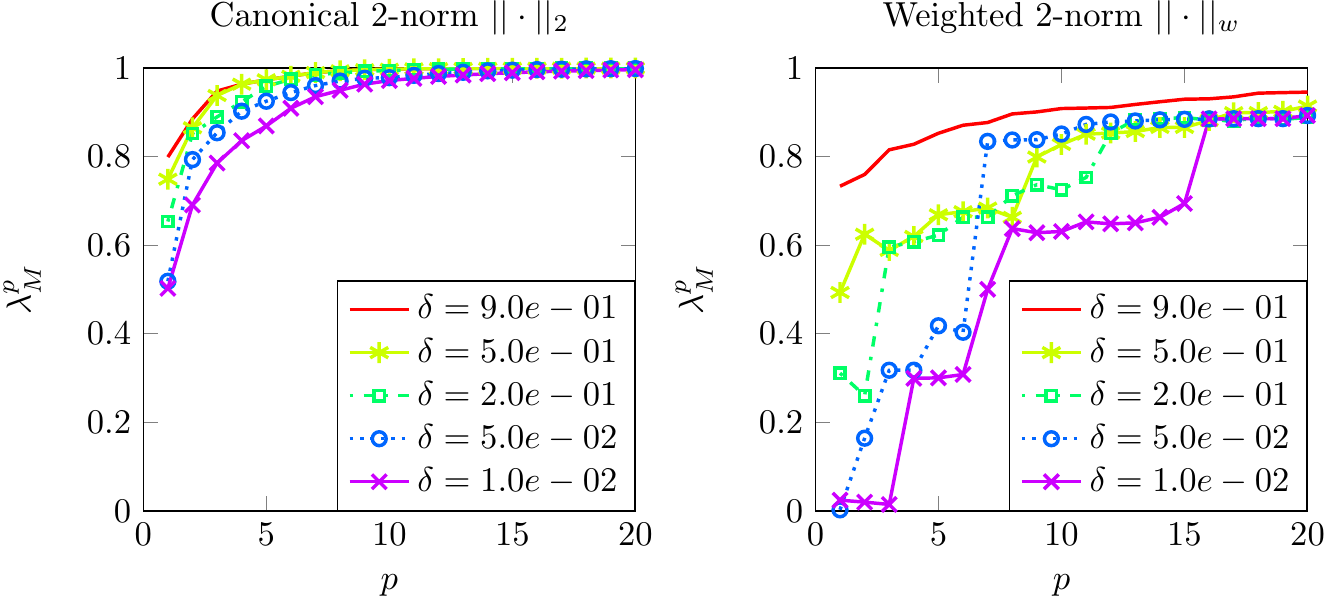}
  \caption{Evolution with $p$ of the effectivity index $\lambda^p_M$ for different $\delta$ at step $k=1$ of the gradient-type algorithm with $\Sc_{X}=\mathcal{R}_{10}(X)$ and for the natural canonical norm (left) or the weighted norm (right).}
  \label{fig:gamma}
\end{figure}

\begin{rmrk}
One can prove that at step $k$ of the gradient-type algorithm, when computing an approximation $y^{k}_{M}$ of $r^{k}$ with a greedy algorithm stopped using the heuristic stopping criterion
\eqref{eq:stoppingcriterion},  the effectivity index $\hat\tau^k$ of the computed error estimator $\hat \epsilon^k$ is such that 
$$
\hat\tau^k  \in \left(\sqrt{\frac{1-(\delta/\lambda^p_M)^2}{1-\delta^2}},\sqrt{\frac{1}{1-\delta^2}}\right).
$$
where $\lambda^p_M$ is the effectivity index of error indicator $e^{p}_{M}$ (supposed such that $\delta/\lambda^p_M<1$).
 That provides an explanation for the observations made on Figure \ref{fig:eff_index}.
\end{rmrk}

Now, we observe in Table \ref{Tab:ymXp} the number of iterations of the greedy algorithm for the approximation of the residual $r^{k}$ with a relative precision $\delta$, with a fixed value $p=20$ for the evaluation of the stopping criterion. The number of iterations corresponds to the rank of the resulting approximation. We note that the required rank is higher when using the weighted norm. 
It reflects the fact that it is more difficult to reach precision $\delta$ when using the weighted norm rather than the natural canonical norm.

\begin{table}[h]\begin{center}\footnotesize
\begin{tabular}{|c|c|c|c|c|c||c|c|c|c|c|}\hline
      ~ & \multicolumn{5}{c||}{Canonical 2-norm $\Vert \cdot\Vert _2$} & \multicolumn{5}{c|}{Weighted 2-norm $\Vert \cdot\Vert _w$} \\  
 k$\backslash\delta$  &0.9&0.5&0.2&0.05&0.01  &0.9 & 0.5  & 0.2  &0.05 & 0.01 \\ \hline
      1 & 1  &   1  &   3  &   7  &  11  &      8  &  21  &  31  &  35 &   51 \\
      2 & 1  &   3  &   7  &  16  &  27  &      5  &  22  &  14  &  24 &   42 \\
      3 & 1  &   5  &  11  &  19  &  24  &      4  &  15  &  24  &  23 &   43 \\
      4 & 1  &   3  &  11  &  14  &  24  &      8  &  11  &  19  &  37 &   42 \\
      5 & 1  &   6  &   7  &  15  &  24  &      6  &  19  &  23  &  14 &   38 \\
      6 & 1  &   8  &   8  &  16  &  24  &      3  &  12  &  47  &  25 &   63 \\
      7 & 1  &   5  &   7  &  17  &  24  &      7  &  14  &  16  &  29 &   47 \\
      8 & 1  &   4  &   8  &  16  &  24  &      5  &  12  &  22  &  21 &   40 \\
      9 & 1  &   4  &   8  &  16  &  24  &      7  &  13  &  18  &  36 &   45 \\
\hline
\end{tabular}\normalsize\end{center}
\caption{Computation of $\Lambda^{\delta}(r^{k})$ for different precisions $\delta$ and at different steps $k$ of the gradient-type algorithm, with $\Sc_X=\mathcal{R}_{10}$ (direct approach). The table indicates the number of greedy corrections computed  for reaching the precision $\delta$ using the heuristic stopping {criterion} \eqref{eq:stoppingcriterion} with $p=20$. }
\label{Tab:ymXp}\end{table}

\subsection{ {Higher dimensional case} }

Now, we consider a diffusion coefficient of the form $\kappa(x,\xi) = \kappa_0 + \sum_{i=1}^8 \xi_i \kappa_i(x)$ where $\kappa_0=10$, $\xi_i \sim U(-1,1)$ are independent uniform random variables, and the functions $\kappa_i(x)$ are given by:
\begin{equation*}
 \begin{array}{cccc}
  \kappa_1(x)=\cos(\pi x_1), & \kappa_3(x)=\sin(\pi x_1), & \kappa_5(x)=\cos(\pi x_1)\cos(\pi x_2), & \kappa_7(x)=\cos(\pi x_1)\sin(\pi x_2), \\
  \kappa_2(x)=\cos(\pi x_2), & \kappa_4(x)=\sin(\pi x_2), & \kappa_6(x)=\sin(\pi x_1)\sin(\pi x_2), & \kappa_8(x)=\sin(\pi x_1)\cos(\pi x_2).
 \end{array}
\end{equation*}
In addition, the advection coefficient is given by $c=\xi_0 c_0$, where $\xi_0 \sim U(0,4000)$ is a uniform random variable. 
We denote ${V}=\xHone_{0}(\Omega)$ and ${S}=\xLtwo(\Xi,dP_{\xi})$ where $(\Xi,\mathcal{B}(\Xi),P_\xi)$ is a probability space with $\Xi=]-1,1[^8 \times ]0,4000[$ and $P_\xi$ the uniform measure. Here $V_N \subset V $ is a $\mathbb{Q}_1$ finite element space associated with a uniform mesh of 3600 elements, with a dimension $N=3481$. We take $S_P = \otimes_{i=0}^8 S_P^{\xi_i} \subset S$, where $S_P^{\xi_i}$ are polynomial function spaces of degree $7$ on $\Xi_i$ with $P=\dim( S_P^{\xi_i})=8$. Then, the Galerkin approximation in $V_N\otimes S_P$ (solution of \eqref{eq:FV}) is searched under the form
$u=\sum_{i=1}^N \sum_{j_0=1}^P\cdots\sum_{j_8=1}^P (u_{i,j_0,\cdots,j_9}) \phi_j\otimes(\otimes_{\mu=0}^8 \psi_{j_\mu}^\mu )$. This Galerkin approximation can be identified with its set of coefficients, still denoted by $u$ which is a tensor
\begin{equation}
u\in X = \mathbb{R}^N\otimes(\otimes_{\mu=0}^8\mathbb{R}^P)\quad
\text{ such that } \quad Au=b ,
\label{eq:alg2}
\end{equation}
 where $A$ and $b$ are the algebraic representations on the chosen basis of $V_N\otimes S_P$ of the bilinear and linear forms in \eqref{eq:FV}. The obtained algebraic system of equations has a dimension larger than $10^{11}$ and its solution clearly requires the use of model {reduction methods}.\\

Here, we compute low rank approximations of the solution of \eqref{eq:alg2} in the canonical tensor subset $\mathcal{R}_r(X)$ with $r\ge 1$. Since best approximation problems in  $\mathcal{R}_r(X)$ are well posed for $r=1$ but ill posed for $d>2$ and $r>1$, we rely on the greedy algorithm presented in section   \ref{sec:greedy} with successive corrections in $\Sc_X=\mathcal{R}_1(X)$ computed with Algorithm \ref{alg:procedure1}. 
\begin{rmrk}
Low-rank approximations could have been computed directly with Algorithm \ref{alg:procedure1} by choosing for $\Sc_X$ other stable low-rank formats adapted to high-dimensional problems, such as Hierarchical Tucker (or Tensor Train) low-rank formats.
\end{rmrk}

\subsubsection{Convergence study}

In this section, low rank approximations of the solution $u$ of \eqref{eq:alg2} are computed for the two different norms $\Vert\cdot\Vert_2$ and $\Vert\cdot\Vert_w$ defined as in section \ref{sec:comp_minres}. Here, we assume that the weighting function $w$ is equal to $100$ in the subdomain $D\subset\Omega$, and $1$ elsewhere.\\
Since $\text{dim}(X)\ge 10^{11}$, the exact Galerkin approximation $u$ in $X$ is no more computable. As a reference solution, we consider a low-rank  approximation $u_{\text{ref}}$ of $u$  computed using a greedy rank-one algorithm based on a canonical minimal residual formulation. {We introduce an estimation  $\hat E_K$  of $\frac{\Vert u-u_{\text{ref}}\Vert_2}{\Vert u\Vert_2}$  based on Monte-Carlo integrations using $K$ realizations $\{\xi_k\}_{k=1}^K$ of the random variable $\xi$, defined by 
$$
\hat E_K^2 =  \frac{\frac{1}{K} \sum_{k=1}^K \Vert u(\xi_k)-u_{\text{ref}}(\xi_k)\Vert_V^2}{\frac{1}{K}\sum_{k=1}^K \Vert u(\xi_k)\Vert_V^2},
$$
with a number of samples $K$ such that the Monte-Carlo estimates has a relative standard deviation (estimated using the statistical variance of the sample) lower than $10^{-1}$.
The rank of $u_{\text{ref}}$ is here selected such that $ \hat E_K <10^{-4}$, which gives a reference solution with a rank of $212$.
}

{On Figure \ref{HD_convergence_norm}, we plot the convergence with the rank $r$ of the approximations computed by both A-IMR and CMR algorithms and of the greedy approximations $\widetilde{u}^r_2$ and $\widetilde{u}^r_w$ of the reference solution $u_{\text{ref}}$ for both norms.}
We observe (as for the lower-dimensional example) that for both norms, with different values of the parameter $\delta$ (up to $0.9$), the A-IMR method provides a  better approximation of the solution in comparison to the CMR method. 
{When decreasing $\delta$, the proposed algorithm seems to provide approximations that tend to present the same convergence as the greedy approximations $\widetilde{u}^r_2$ and $\widetilde{u}^r_w$. }

\begin{figure}[h]
   \centering
   \subfigure[Canonical norm]{
     \centering
     \includegraphics[scale=0.75]{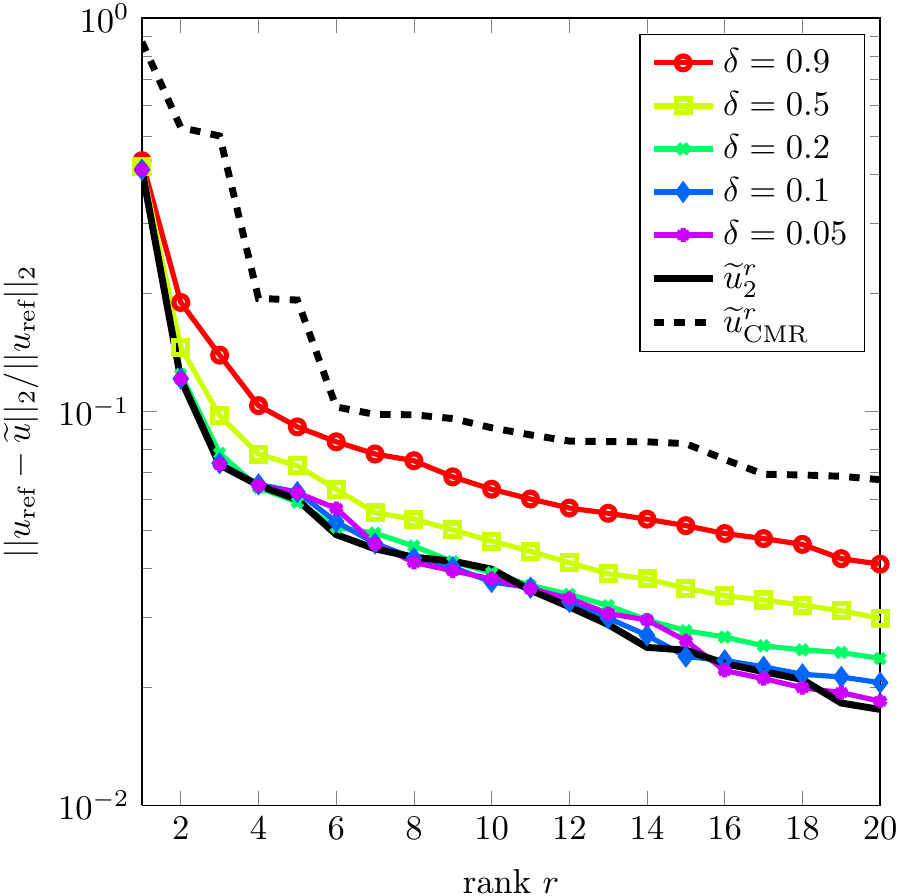}
     }
   \subfigure[Weighted norm]{
     \centering
     \includegraphics[scale=0.75]{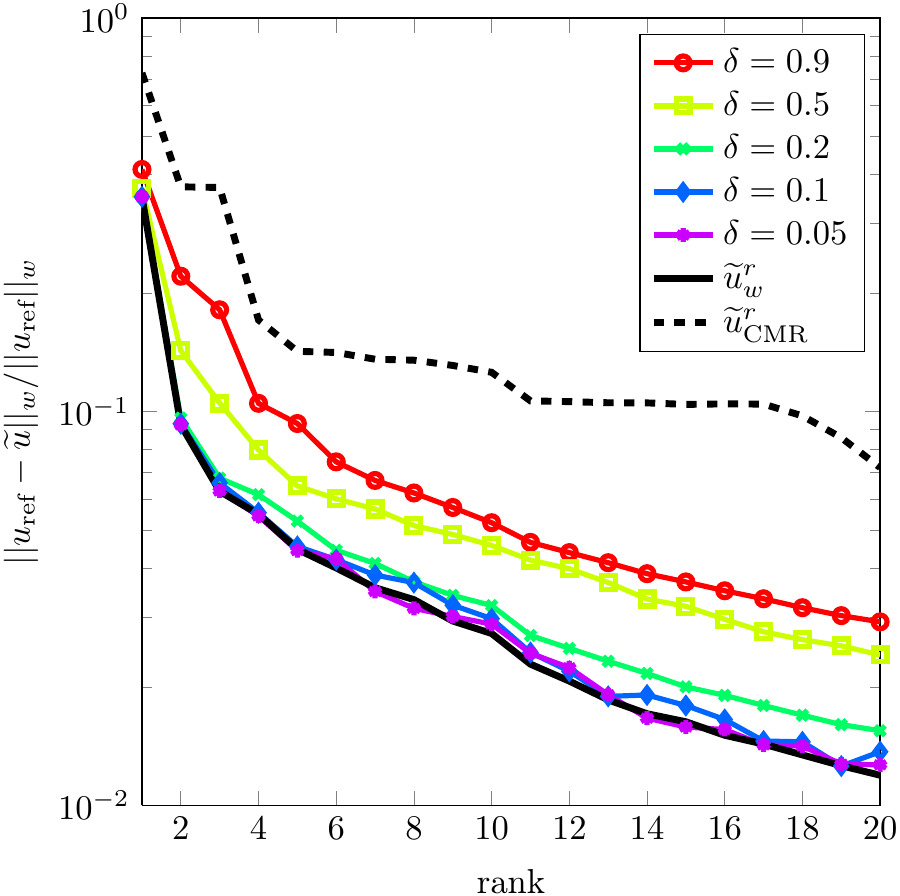}
     }
   \caption{Convergence with the rank of approximations obtained with the greedy CMR or A-IMR algorithms for different precisions $\delta$. On the left (resp. right) plot, convergence is plotted with respect to the norm $\|\cdot\|_2$ (resp. $\| \cdot \|_w$) and A-IMR is used with the objective norm  $\|\cdot\|_2$ (resp. $\| \cdot \|_w$). }  \label{HD_convergence_norm}
\end{figure}

\subsubsection{Study of the greedy algorithm for $\Lambda^\delta$}

Now,  we study the behavior of the updated  greedy algorithm described in Section  \ref{lambdadeltaalgo} for the computation of an approximation $y^{k}_{m}=\Lambda^{\delta}(r^{k})$ of the residual $r^{k}$ during the gradient-type algorithm. Here, we use the particular strategy which consists in updating functions associated to each dimension $\mu\in I = \{2,\hdots,10\}$ (steps (2)-(3) are performed 9 times per iteration). The update of functions associated with the first dimension is not performed since it would involve the expensive computation of approximations in a space $Z^k_m$ with a large dimension $m N$. \\

In table \ref{Tab:ymXp_HD}, we summarize  the required number of  greedy corrections needed at each iteration of the gradient type algorithm for reaching the precision $\delta$ with the heuristic stagnation criterion \eqref{eq:stoppingcriterion} with $p=20$. As for the previous lower-dimensional test case, the number of corrections increases as $\delta$ decreases and is higher for the weighted norm than for the canonical norm. However, we observe that this number of corrections remains reasonable even for small $\delta$. 
 
\begin{table}[h]\begin{center}\footnotesize
\begin{tabular}{|c|c|c|c|c|c||c|c|c|c|c|}\hline
      ~ & \multicolumn{5}{c||}{Canonical 2-norm $\Vert \cdot\Vert _2$} & \multicolumn{5}{c|}{Weighted 2-norm $\Vert \cdot\Vert _w$} \\  
 k$\backslash\delta$  &0.9&0.5&0.2&0.05&0.01  &0.9 & 0.5  & 0.2  &0.05 & 0.01 \\ \hline
1 &  1 &  1 &  3 &  6 & 14 &  3 & 12 & 53 & 65 & 91 \\
2 &  1 &  3 &  5 & 13 & 24 &  2 & 11 & 49 & 62 & 91 \\
3 &  1 &  3 &  5 & 12 & 17 &  3 & 12 & 49 & 62 & 91 \\
4 &  1 &  3 &  5 & 13 & 26 &  3 & 12 & 53 & 62 & 91 \\
5 &  1 &  3 &  6 & 12 & 24 &  2 & 11 & 47 & 65 & 89 \\
6 &  1 &  3 &  5 & 13 & 27 &  3 & 11 & 42 & 63 & 88 \\
7 &  1 &  3 &  5 & 12 & 27 &  3 & 10 & 50 & 65 & 88 \\
8 &  1 &  3 &  5 & 12 & 26 &  3 & 10 & 49 & 60 & 87 \\
9 &  1 &  3 &  6 & 12 & 26 &  3 & 13 & 49 & 65 & 80 \\ \hline
\end{tabular}\normalsize\end{center}
\caption{Computation of $\Lambda^{\delta}(r^{k})$ for different precisions $\delta$ and at different steps $k$ of the gradient-type algorithm (first iteration $r=1$ of the greedy approach with $\Sc_X=\mathcal{R}_{1}$). The table indicates the number of greedy corrections computed  for reaching the precision $\delta$ using the heuristic stopping {criterion} \eqref{eq:stoppingcriterion} with $p=20$.}
\label{Tab:ymXp_HD}\end{table}

\subsubsection{Estimation of a quantity of interest}

Finally, we study the quality of the low rank approximations $\widetilde u$ obtained with both CMR and A-IMR algorithms for the canonical and weighted norms. To this end, we compute the quantity of interest $Q(\widetilde{u})$ defined by \eqref{eq:QOIdef}.
Figure \ref{fig:HD_convergence_variance} illustrates the convergence with the rank of the variance of the approximate quantities of interest.
{
Note that the algorithm do not guarantee the monotone convergence of the quantity of interest with respect to the rank, that is confirmed by the numerical results. 
}
However, we observe that the approximations provided by the A-IMR algorithm are better than the ones given by the CMR, even for large $\delta$.
{Also, when using the weighted norm in the A-IMR algorithm, the quantity of interest is estimated with an better precision.}
Similar behaviors are observed for the convergence of the mean.

\begin{figure}[h]
   \centering
   \subfigure[Using the canonical norm.]{
     \centering
     \includegraphics[scale=0.75]{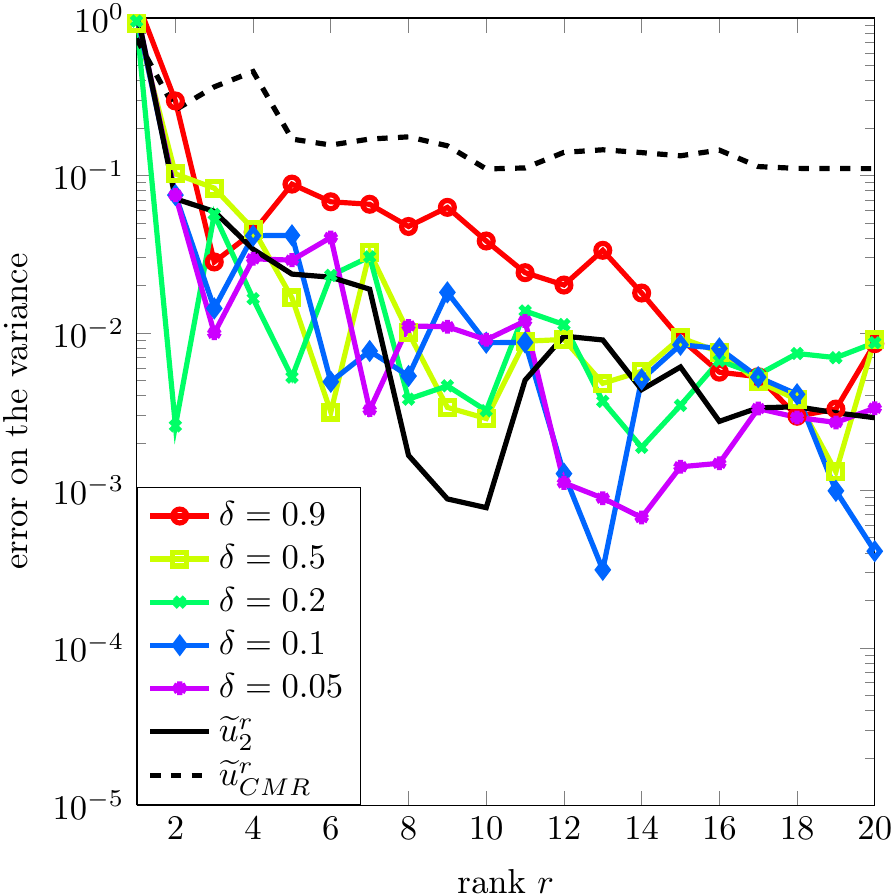}
     }
   \subfigure[Using the weighted norm.]{
     \centering
     \includegraphics[scale=0.75]{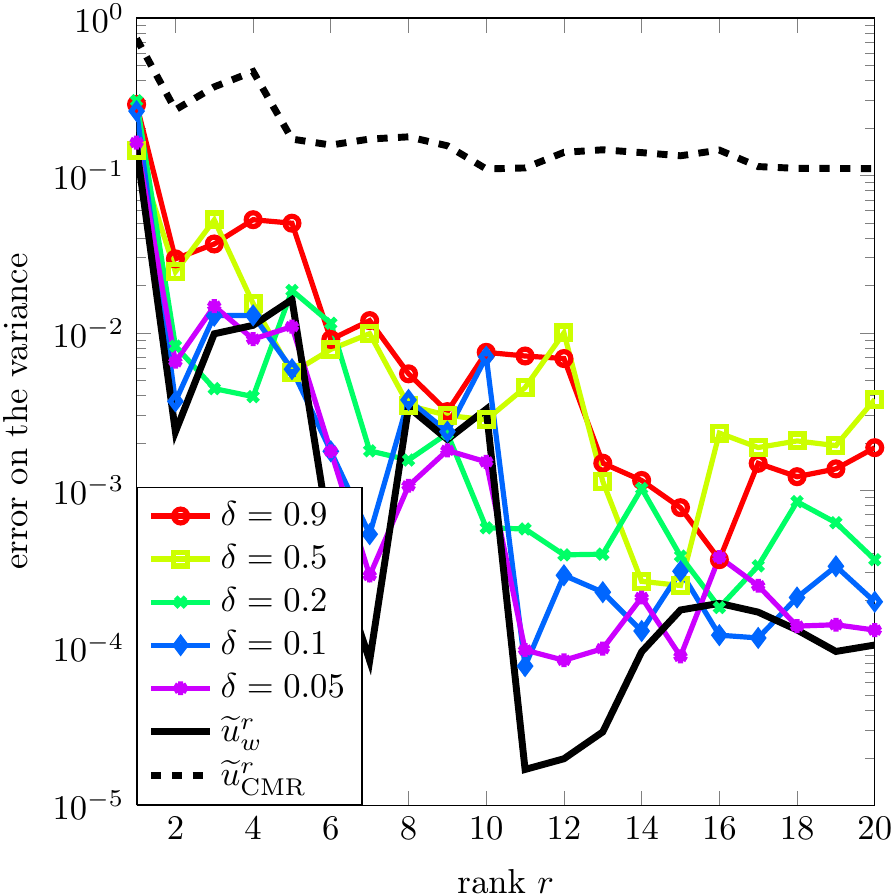}
     }
   \caption{Relative error with respect to the variance of  the reference solution 
   $Q(u_{\text{ref}})$  with the canonical (left) and weighted  (right) norms.}
   \label{fig:HD_convergence_variance}
\end{figure}

\section{Conclusion}
In this paper, we have proposed a new algorithm for the construction of low-rank approximations of the solutions of high-dimensional weakly coercive problems formulated  in a tensor space $X$. 
This algorithm is based on the approximate minimization (with a certain precision $\delta$) of a particular residual norm on given low-rank tensor subsets $\Sc_{X}$, the residual norm coinciding with some measure of the error in solution. Therefore, the algorithm is able to provide a quasi-best low-rank approximation with respect to a norm $\Vert\cdot\Vert_{X}$ that can be designed for a certain objective.
A weak greedy algorithm using this minimal residual approach has been introduced and its convergence has been proved under some conditions. A numerical example dealing with the solution of a stochastic partial differential equation has illustrated the effectivity of the method and the properties of the proposed algorithms.
Some technical points  have to be addressed in order to apply the method to a more general setting and to improve its efficiency {and robustness}: the development of efficient solution methods for the computation of residuals when using general norms $\Vert\cdot\Vert_{X}$ (that are not induced norms in the tensor space $X$), {the introduction of robust error estimators  during the computation of residuals (for the robust control of the precision $\delta$, {which is the key point for controlling the quality of the obtained approximations})}, the application of the method for using tensor formats adapted to high-dimensional problems (such as Hierarchical formats). Also, a challenging perspective consists in coupling low-rank approximation techniques with adaptive approximations in infinite-dimensional tensor spaces {(as in \cite{Bachmayr2013})} in order to provide approximations of high-dimensional equations (PDEs or stochastic PDEs) with a complete control on the precision of quantities of interest.

\end{document}